\documentclass[a4, 10pt,leqno]{amsart}
\usepackage{pifont}
\usepackage{mathtools}
\usepackage{mathabx}
\usepackage{bbm}
\usepackage{amsthm}
\usepackage{mathtools}
\usepackage{mathtools,accents}
\usepackage{amssymb,amscd,amsthm,enumerate}
\usepackage{mathrsfs}
\usepackage{a4wide}
\usepackage{eqnarray}
\usepackage{enumitem}
\usepackage[utf8]{inputenc}
\usepackage[T1]{fontenc}
\usepackage{lmodern}
\usepackage[english]{babel}
\usepackage{microtype}
\usepackage{float}
\usepackage{esint}
\usepackage{physics}
\usepackage{a4wide}
\usepackage{algorithmic}
\usepackage[ruled]{algorithm2e}
\usepackage[active]{srcltx}
\usepackage{lipsum}
\usepackage{upgreek}
\usepackage{listings}
\usepackage{wasysym}
\usepackage{csquotes}
\usepackage{enumerate}
\usepackage{color}
\usepackage[x11names,svgnames,usenames,dvipsnames]{xcolor}
\definecolor{ultrablue}{rgb}{0.0,0.0, 1}
\definecolor{jigari}{rgb}{0.39,0.0, 0.0}
\usepackage{url}
\usepackage{hyperref}
\hypersetup{
	linktoc=page,
	colorlinks,
	linkcolor={ultrablue},
	citecolor={ultrablue},
	urlcolor={ultrablue}
}
\hypersetup{breaklinks=true}
\usepackage{soul}
\usepackage{graphicx}
\usepackage{tikz}
\usepackage{tikzpagenodes}
\usepackage{scrlayer-scrpage}
\usepackage{tikz}
\usetikzlibrary{calc,spy}
\usetikzlibrary{shapes.geometric, arrows}
\usepackage{pgfplots}
\usetikzlibrary{intersections, pgfplots.fillbetween}
\usepackage{tikz}
\usetikzlibrary{matrix}
\usepackage[all]{xy}
\usepackage{subfig}
\newlength{\myeqskip}  
\AtBeginDocument{%
	\setlength\abovedisplayskip{\myeqskip}%
	\setlength\belowdisplayskip{\myeqskip}%
	\setlength\abovedisplayshortskip{\myeqskip-\baselineskip}%
	\setlength\belowdisplayshortskip{\myeqskip}}
\allowdisplaybreaks
\numberwithin{equation}{section}
\usepackage[a4paper,bindingoffset=0in,%
left=1.2in,
right=1.2in,
top=1.8in,
bottom=1.8 in,%
footskip=1mm]{geometry}
\theoremstyle{plain}
\newtheorem*{theorem*}{Theorem}
\newtheorem*{corollary*}{Corollary}
\newtheorem{lemma}{Lemma}[section]

\newtheorem{proposition}[lemma]{Proposition}

\theoremstyle{definition}

\newtheorem{remark}[lemma]{Remark}

\theoremstyle{remark}

\newcounter{example}

\makeatletter
\def\@xnamedef#1{\expandafter\protected@xdef\csname #1\endcsname}
\def\no@harm{} 
\def\ead@au#1{\protected@edef\@ead@au{#1}}
\patchcmd\runningauthor@fmt{\global\edef}{\protected@xdef}{}{}
\patchcmd\runningauthor@fmt{\global\edef}{\protected@xdef}{}{}
\patchcmd\author@fmt{\edef}{\protected@edef}{}{}
\patchcmd\add@xtok{\xdef}{\protected@xdef}{}{}
\makeatother
\newcommand{\secref}[1]{\textsection\thinspace\ref{#1}}

\newcommand{\R}{\mathbb R}
\newcommand{\Sp}{\mathbb S}

\DeclareMathOperator{\Hess}{Hess}

\DeclareMathOperator{\Riem}{Riem}

\newcommand{\K}{\mathcal K}
\newcommand{\y}{{\footnotesize \textbf{\textit{y}}}}
\newcommand{\vecv}{\footnotesize \textbf{\textit{v}}}
\newcommand{\vecw}{{\footnotesize \textbf{\textit{w}}}}

\newcommand{\dist}{\mathrm{d}}

\newcommand{\cpt}{{\textsf{CPT}}}
\usepackage{accents}
\newcommand*{\dt}[1]{%
		\accentset{\mbox{\bfseries .}}{#1}}

\makeatletter
\newcommand*\bigcdot{\mathpalette\bigcdot@{.5}}
\newcommand*\bigcdot@[2]{\mathbin{\vcenter{\hbox{\scalebox{#2}{$\m@th#1\bullet$}}}}}
\makeatother
\newcount\stylenum  \newdimen\styledim 
\def\varstyle#1{\mathchoice{\stylenum=0 #1}{\stylenum=1 #1}{\stylenum=2 #1}{\stylenum=3 #1}}
\def\mathaxis{\fontdimen22\ifcase\stylenum 
	\textfont\or\textfont\or\scriptfont\or\scriptscriptfont\fi2 }
\def\setstyledim{\styledim=\ifcase\stylenum .1em\or.1em\or.07em\or.05em\fi\relax}
\def\sqdot{\mathbin{\varstyle{\raise\mathaxis\hbox{\setstyledim
				\kern\styledim 
				\vrule width1.2\styledim height.6\styledim depth.6\styledim
				\kern\styledim}}}}

		\DeclareMathAlphabet{\mathdutchcal}{U}{dutchcal}{m}{n}
		\SetMathAlphabet{\mathdutchcal}{bold}{U}{dutchcal}{b}{n}
		\DeclareMathAlphabet{\mathdutchbcal}{U}{dutchcal}{b}{n}
		\DeclareSymbolFont{myletters}{OML}{ztmcm}{m}{it}
		\DeclareMathSymbol{\nicelambda}{\mathord}{myletters}{"15}
		\usepackage{nicefrac}
\usepackage{stackengine}
\setstackEOL{\\}
\newcounter{tmpctr}
\newcommand\fancyRoman[1]{%
	\setcounter{tmpctr}{#1}%
	\setbox0=\hbox{\kern.2pt\textsf{\Roman{tmpctr}}}%
	\setstackgap{S}{-.6pt}%
	\Shortstack{\rule{\dimexpr\wd0+.1ex}{.7pt}\\\copy0\\
		\rule{\dimexpr\wd0+.1ex}{.7pt}}%
}

		\usepackage{pifont}
		
\newcommand{\restr}{\raisebox{-.1908ex}{$\big|$}}
\makeatother
\newcommand{\ident}{\raisebox{0pt}{\scalebox{1.1}{$\mathbbm{1}$}}\hspace{-1pt}}

\makeatletter
\newcommand{\tpitchfork}{%
	\vbox{
		\baselineskip\z@skip
		\lineskip-.52ex
		\lineskiplimit\maxdimen
		\m@th
		\ialign{##\crcr\hidewidth\smash{$-$}\hidewidth\crcr$\pitchfork$\crcr}
	}%
}
\makeatletter
\newcommand{\tpmod}[1]{{\@displayfalse\pmod{#1}}}
\makeatother
\newcommand{\swap}{
\substack{\curvearrowright\\
	\curvearrowbotleft}}
\usepackage{stackengine,scalerel}

\newcommand\overstar[1]{\ThisStyle{\ensurestackMath{%
			\setbox0=\hbox{$\SavedStyle#1$}%
			\stackengine{0pt}{\copy0}{\kern0\ht0\smash{\SavedStyle\star}}{O}{c}{F}{T}{S}}}}

\newcommand\dunderline[3][-1pt]{{%
		\sbox0{#3}%
		\ooalign{\copy0\cr\rule[\dimexpr#1-#2\relax]{\wd0}{#2}}}}
\makeatletter
\@namedef{subjclassname@2020}{%
	\textup{2020} Mathematics Subject Classification}
\makeatother
\makeatletter 
\def\l@subsection{\@tocline{2}{0pt}{3pc}{6pc}{}}
\makeatother
\makeatletter 
\def\l@subsection{\@tocline{2}{0pt}{3pc}{6pc}{}}
\makeatother

\def\bysame{\leavevmode\hbox to3em{\hrulefill}\thinspace}
\begin{document}
\title[\scriptsize {On the rigidity of Finslerian conformal circle-preserving transformations}]{\small On the rigidity of \\ Finslerian conformal circle-preserving transformations} 
%
\author[\protect \scriptsize Z. Fathi]{Zohreh Fathi}
\author[\protect \scriptsize S. Lakzian]{Sajjad Lakzian$^{^{\scalebox{1}{$\star$}}}$}
\address{\noindent -- Z. Fathi \& S. Lakzian \newline  School of Mathematics, \newline Institute for Research in Fundamental Sciences (IPM), \newline P. O. Box 19395-
	5746, Tehran, Iran}
\address{\noindent -- S. Lakzian \newline \noindent Department of Mathematical Sciences\newline Isfahan University of  Technology (IUT) \newline Isfahan 8415683111, Iran.}
\email{\href{mailto:slakzian@iut.ac.ir}{slakzian@iut.ac.ir}}
\subjclass[2020]{53C60; 53C24}
\keywords{Finslerian manifold, conformal diffeomorphism, circle-preserving diffeomorphism, geodesically complete, constant flag curvature}
\thanks{ZF is partially supported by IPM grant No. 1403530043 and SL is partially supported by IPM grant No. 1405530313.}
\thanks{SL is supported by the INSF Grant No. 4030556, awarded by the “On
	the Frontiers of Mathematical Sciences” program.}
\thanks{$\raisebox{2pt}{$\star$}$\textit{the corresponding author}}
\maketitle
%
\begin{abstract}
\par \textsl{We prove that if a forward or backward complete Berwaldian or reversible Finslerian manifold $(M,F)$ admits a non-trivial (non-homothety) conformal concircular transformation ({\bf c}ircle-{\bf p}reserving {\bf t}ransformation or {\cpt} for short), $e^{\sigma} F$, where $\sigma$ has at least one critical point, then, $(M,F)$ is Riemannian. Consequently, $(M,g)$ is conformally diffeomoprhic to either 1) the standard sphere, 2) the Euclidean space, or 3) the hyperbolic space.}
\par \textsl{In particular, a compact Berwaldian or reversible Finslerian manifold does not admit any non-trivial conformal {\cpt}s, unless it is conformally diffeomorphic to the standard sphere.} 
\end{abstract}
\date{\today}
\section{Introduction}\label{sec:intro}
\par A geodesic circle in a Riemannian manifold $\left(M^n,g\right)$ is a smooth curve with constant first geodesic curvature $\kappa \ge 0$ and vanishing second geodesic curvature. Using a Frenet-Serret-Jordan apparatus, this means, once parametrized in arc-length, $X = c'$ and $Y = c''$ solve the ODE system 
\begin{align*}
	{X}'= Y, \quad {Y}'=-\kappa^2 X, \quad ('\; := \nabla_{{c}'});
\end{align*}
e.g. see~\cite{NM}.
\par A diffeomorphism, $\varphi$, between two Riemannian manifolds (or domains within) that preserves geodesic circles is referred to as a circle-preserving (a.k.a. concircular) transformation ({\cpt} for short). The initial development of the Riemannian theory is due to Yano~\cite{Yano1}--\cite{Yano5} with important contributions of~\cite{Ish-Tash,Ish, Tash1, Tash2, VO, Kan, Kuh} later on.
\subsection*{{\cpt}s in the Riemannian setting}
\par Vogel's theorem ensures that a Riemannian {\cpt} must be conformal, i.e., the pullback metric by $\varphi$ is $\widebar{g} = \rho^{-2} g$ and the inverse conformal factor $\rho$ satisfies the local PDE system $\rho_{i;j} = \lambda g_{ij}$ (semicolon $;$ stands for covariant differentiation) for a scalar function $\lambda$~\cite{Br,VO}. When $\varphi$ is a homothety, i.e., when $\lambda$ is constant, $\varphi$ is called a trivial {\cpt}. 
\par Letting $\sigma$ be the logarithmic conformal factor $\sigma = - \ln \rho$, the coordinate-free geometric PDE that characterizes a Riemannian {\cpt} is given by
\begin{align}\label{eq:pde-sigma}
	\nabla^2 \sigma - d\sigma\otimes d\sigma =\nicefrac{1}{n} \left(\Delta \sigma - \|\nabla \sigma\|^2\right) g;
\end{align}
see~\cite{Yano2, Kuh}.
\par It is important to note that, this is also the PDE that describes Einstein conformal transformations of an Einstein metric $g$; see~\cite{Br}. 
\par The classification of the global solutions of \eqref{eq:pde-sigma} - when $g$ is complete - goes as follows; see the collected works~\cite{Tash1,Tash2,Kan} and the finishing touch~\cite{Kuh}.
\par In the complete case, $\sigma$ can have at most two critical points and 
\begin{itemize}
	\item If $M^n$ is closed, then $(M^n,g)$ is conformally diffeomorphic to the standard sphere; furthermore, if $g$ is of constant scalar curvature, then $(M^n,g)$ is homothetic to the standard sphere. In this case, $\sigma$ has precisely two critical points. 
	\smallskip
	\item If $M$ is complete and non-compact and $\sigma$ has only one critical point, then $(M^n,g)$ is conformally diffeomorphic to either the flat Euclidean space or the hyperbolic space.
	\smallskip
	\item If $M$ is complete and non-compact and $\sigma$ has no critical points, then $(M^n,g)$ is conformally diffeomorphic to an isometric product $\R \times \Sigma$ in which $\Sigma$ is complete. 
\end{itemize} 
\subsection*{{\cpt}s in the Finslerian setting}
\par A Finslerian manifold $\left(M^n,F\right)$ is a direction-dependent generalization of the Riemannian one by replacing the Riemannian first fundamental form by a norm $F$ with certain smoothness and convexity properties; e.g., see \cite{BCS}. This norm gives rise to a Finslerian metric $g(x,\y)$ (or $g^{\y}$ for brevity), which depends on the direction $\y \neq 0$ as well.  
\subsubsection*{\small \sf \textbf{Finslerian geodesic circles}}
\par Let $c(t)$ be a smooth curve. We denote its natural lift to the tangent bundle by $\widetilde{c}(t) := \left(c(t), \dt{c}(t) \right)$. Let $\nabla$ denote the Cartan connection; see~\secref{subsec:cartan}.
\par The higher derivatives of $c$ can now be defined as $c^{(k)}(t) := \nabla^{k-1}_{\dt{\widetilde{c}}} \dt{c}$. For the covariant derivatives of $c$ w.r.t. a general parameter and the arc-length parameter (w.r.t. $g_{\dt{c}}$), we will adopt the standard notation $^{\dt{}}:= \nabla_{\dt{\widetilde{c}}(t)}$ and $':= \nabla_{{\widetilde{c}}'(s)}$ respectively. Though, for better readability, all derivatives of the coordinate functions will always be dotted.
\par As a result, a geodesic circle is characterized by the ODE 
$$
c'''(s) + \|c''(s)\|^2c'(s) = 0, \quad \text{or equivalently,} \quad c'''(s) + \kappa^2 c'(s) = 0.
$$
Furthermore, there is a unique geodesic circle parametrized by arc-length with the initial conditions $c(0) = p$, $c'(0) = u$, $c''(0) = v$ when $g_{u}(u,v) = 0$ and $\|u\| = 1$; in this case, $\kappa = \|c''(s)\| = \|v\|$.
For more detials, see~\cite{SY}.
\subsubsection*{\small \sf \textbf{Finslerian {\cpt}s}}
\par A Finslerian {\cpt} is a diffeomorphism $\varphi$ between two Finslerian structures that preserves Finslerian geodesic circles, i.e., takes geodesic circles to geodesic circles. 
\par \emph{Note that the inverse image of a geodesic circle is not necessarily a geodesic circle, i.e., {\cpt}s do not necessarily constitute a subgroup of the diffeomorphism group.} 
\subsubsection*{\small \sf \textbf{Conformality is not automatic}}
\par It was shown in~\cite{VO} that a Riemannian {\cpt} is automatically conformal. It is important to note that unlike the Riemannian setting, Finslerian {\cpt}s are not necessarily conformal, contrary to what is claimed in some literature. Indeed, not only does Vogel's proof break down, but also examples of circle-preserving fields (consequently maps) that are not conformal and vice versa are produced in~\cite[Section 6]{SY}; see also \cite{FL} for an explanation of the counterexamples. 
\par A straightforward modification of the proof of conformality given in~\cite{VO} gives something weaker than conformality; i.e., such maps preserve perpendicularity to the flagpole. We omit the proof. 
\begin{proposition}
	The perpendicularity ``$u \perp \y \;\; \text{w.r.t.} \;\; {g^{\y}}$'' is preserved under a Finslerian {\cpt}.
\end{proposition}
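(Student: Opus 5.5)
The idea is to follow Vogel's argument in~\cite{VO}, keeping track of the flagpole. The key observation is that the initial data of a geodesic circle are a unit flagpole $u$ and an acceleration $v$ with $g_u(u,v)=0$. So perpendicularity to the flagpole is built into the parametrization of circles, and a {\cpt} should be forced to respect it.

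\emph{Step 1: circles through a point with a given velocity line.} Fix $p\in M$ and a direction $\y\neq 0$, normalized so that $F(\y)=1$. For every $v$ with $g^{\y}(\y,v)=0$ there is a unique arc-length geodesic circle $c_v$ with $c_v(0)=p$, $c_v'(0)=\y$, $c_v''(0)=v$. Its curvature is $\kappa=\|v\|$; for $v=0$ it is the geodesic. Reparametrization-free data, namely the set of points traced and the tangent line, are transported by $\varphi$. Hence $\bar c_v:=\varphi\circ c_v$ is, after reparametrization by $\bar F$-arc-length, a geodesic circle through $\varphi(p)$ with unit flagpole $\bar{\y}=d\varphi(\y)/\bar F(d\varphi(\y))$.

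\emph{Step 2: compute the second derivative of the image curve.} Let $\bar s$ be the $\bar F$-arc-length of $\bar c_v$, so that $d\bar s/ds=\bar F(d\varphi(c_v'))=:\mu(s)$. The chain rule for the Cartan covariant derivative along the lifted curve, using the flagpole $\dot{\bar c}$ on both sides, gives
\begin{align*}
\bar c_v''(0)=\mu(0)^{-2}\Big(\bar\nabla_{d\varphi(\y)}d\varphi(\y)+d\varphi(v)\Big)-\mu(0)^{-3}\dot\mu(0)\,d\varphi(\y).
\end{align*}
Here $\bar\nabla_{d\varphi(\y)}d\varphi(\y)$ denotes the part depending only on $(p,\y)$: the difference of the spray coefficients of $\varphi^*\bar F$ and $F$. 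By Step~1, $\bar c_v''(0)$ must be $\bar g^{\bar{\y}}$-orthogonal to $\bar{\y}$. Taking the $\bar g^{\bar{\y}}$-inner product with $d\varphi(\y)$ therefore yields, for every admissible $v$,
\begin{align*}
\bar g^{d\varphi(\y)}\big(d\varphi(v),d\varphi(\y)\big)=A(p,\y)+B(p,\y,v),
\end{align*}
where $A$ is independent of $v$ and $B=\mu(0)^{-1}\dot\mu(0)\,\bar F(d\varphi\y)^2$.

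\emph{Step 3: kill the inhomogeneous terms.} Here lies the main obstacle. We need $\dot\mu(0)$ and $A$ to be controlled as $v$ varies in the linear space $\y^{\perp_{g^{\y}}}$. The quantity $\dot\mu(0)=\partial_{\y}$-derivative of $\bar F\circ d\varphi$ along $c_v$ at $s=0$ depends on $c_v''(0)=v$ only affinely. Indeed, $\mu(s)=\bar F(d\varphi(c_v'(s)))$, and its derivative equals $\bar g^{d\varphi\y}(d\varphi\y,\cdot)/\bar F$ applied to $\nabla$-derivative data, which is affine in $v$. So the identity takes the form $L(v)=\alpha+\beta(v)$ with $L,\beta$ linear. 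Replacing $v$ by $-v$ and by $0$ gives $\alpha=0$, and the $v=0$ case is just the geodesic case. Comparing the $v$-linear parts, I would show that $\beta(v)$ is itself a multiple of $L(v)$ with factor $\neq 1$: this follows because $\dot\mu(0)$ picks up exactly the $\bar g^{d\varphi\y}(d\varphi\y,d\varphi v)$ term. Then $L(v)=cL(v)$ with $c\neq1$ forces $\bar g^{d\varphi(\y)}(d\varphi(v),d\varphi(\y))=0$, i.e., $d\varphi$ maps $\y^{\perp}$ into $(d\varphi\y)^{\perp}$, which is the claim.

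The delicate part will be justifying the Cartan-connection chain rule with changing flagpoles, and checking the exact coefficient in Step~3. The mixed Cartan torsion terms vanish there because they are contracted with the flagpole itself.
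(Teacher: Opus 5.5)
\textbf{Gap: Steps~2--3 extract no information from circle preservation.} The ``factor $\neq 1$'' you expect in Step~3 is in fact exactly $1$. For \emph{every} regular curve reparametrized by $\bar F$-arc-length one has $\bar g^{\bar c'}(\bar c'',\bar c')=\tfrac12\,\tfrac{d}{d\bar s}\bar F(\bar c')^2=0$. Any Cartan-tensor contribution drops out because it is contracted with the flagpole. So the orthogonality you invoke ``by Step~1'' is automatic.

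Concretely,
\begin{align*}
\mu(0)\dot\mu(0)=\tfrac12\tfrac{d}{ds}\big|_{s=0}\bar F\big(d\varphi(c_v')\big)^2=\bar g^{d\varphi(y)}\big(d\varphi(y),\bar\nabla_{d\varphi(y)}d\varphi(y)+d\varphi(v)\big).
\end{align*}
Hence your $B$ equals $L(v)-A$, and the identity of Step~2 is just $L(v)=L(v)$.

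A sanity check confirms this. Steps~2--3 use nothing about $\varphi\circ c_v$ beyond its being a regular curve. They would therefore prove that \emph{every} diffeomorphism preserves perpendicularity to the flagpole. That is false already for the linear map $(x_1,x_2)\mapsto(x_1,2x_2)$ of Euclidean $\mathbb{R}^2$: take $y=(1,1)$, $v=(1,-1)$; the images $(1,2)$ and $(1,-2)$ have inner product $-3$.

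\textbf{Missing idea: the third-order circle equation.} Vogel's argument, which the paper cites as the basis for this omitted proof, uses the \emph{third-order} equation $\bar c'''+\|\bar c''\|^2\bar c'=0$ of the image circle. This is the only place where circle preservation really enters.

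Pull $\bar F$ back by $\varphi$ and set $T=c_v'$, $\mu=\bar F(T)$, $w=\bar\nabla_s T$, $a=\bar g^{T}(T,w)$. Let $P(x)=x-\mu^{-2}\bar g^{T}(T,x)\,T$ be the $\bar g^{T}$-orthogonal projection off $T$. Then $\bar c''=\mu^{-2}P(w)$, the $\bar g^{T}(w,w)$-terms cancel, and the circle equation of the image reduces to
\begin{align*}
P\big(\bar\nabla_s w\big)=3\mu^{-2}a\,P(w).
\end{align*}
At $s=0$ one has $w=v+(\text{a term depending only on }(p,y))$. So the right-hand side contains the quadratic term $3\mu^{-2}\,\bar g^{y}(y,v)\,P(v)$, and $P(v)\neq 0$ for $0\neq v\perp_{g^y}y$.

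Since the identity holds for all such $v$, you may replace $v$ by $tv$ and compare homogeneous parts. In the Riemannian case the only quadratic term on the left is $-\|v\|^2T$, coming from the circle equation of $c_v$ itself. $P$ annihilates it, which yields $\bar g^{y}(y,v)=0$, the desired conclusion.

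In the Finslerian setting you must still check the quadratic part of $P(\bar\nabla_s w)$. With the Cartan connection along the natural lift, as used in the paper, $\nabla_s X$ differs from the Chern derivative with reference vector $c'$ by $C^k_{ij}(c')\,(c'')^j X^i\,\partial_k$. Hence Cartan-tensor terms quadratic in $v$ also appear on the left and have to be controlled.
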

\begin{remark}
	To ensure conformality, all orthogonalities must be preserved. A natural sufficient condition for this to happen is if
	\begin{align*}
		g^{v} = \rho(v,w) g^{w}, \quad \text{and} \quad \widebar{g}^{v} = \widebar{\rho}(v,w) \widebar{g}^{w},
	\end{align*}
	holds for all $v$ and $w$ for some smooth functions $\rho$ and $\bar{\rho}$. But by keeping $w$ fixed, this implies $g^{\y}$ and $\widebar{g}^{\y}$ are conformal to Riemannian metrics and consequently must be Riemannian. This hints that ``Finslerian conformal {\cpt}s must perhaps fall not too far from Riemannian ones''. 
\end{remark}
\par \emph{The analysis in this work is limited to conformal {\cpt}s only. It is a classical fact that in a conformal transformation $\bar{F} = e^{\sigma} F$, $\sigma$ must be a scalar function}; see~\cite{Knebel}.

\par \dunderline{1.2pt}{\small \textit{\textbf{\textsf{Convention:}}}} \textsl{In what follows, we omit the reference to the diffeomorphism $\varphi$ and write a conformal {\cpt} as $\bar{F} = e^{\sigma} F$. We set $\rho = e^{-\sigma}$; we usually work with the PDE in terms of $\sigma$ but in cases where it is more convenient, we write some quantities in terms of $\rho$.} 
\par Recent contributions to the Finslerian theory of {\cpt}s include the important work~\cite{SY} in which a local characterization is established. Also in the recent work \cite{FL}, the authors have developed the theory further and in particular proved a Berwaldian rigidity. For more efficiency in our presentation, in many places, we will refer to \cite{FL}. 

\subsubsection*{\small \sf \textbf{The charcaterizing goemetric PDE}}
\par The coordinate-free PDE that describes a Finslerian conformal {\cpt} is presented in \cite{FL}. A Finslerian conformal {\cpt} satisfies
\begin{align}\label{eq:Fins-geom-pde}
	\nabla_{X^h} (\grad \sigma) -  d\sigma(X)\grad\sigma = f\left(x\right)X, \quad \forall X \in T_xM,
\end{align}
where $\nabla$ is the Cartan (or alternatively Chern) connection. Here, $X^h$ means horizontal lift of $X$; see~\secref{subsec:first-ord-calc}. $\grad \sigma$ is the gradient defined via the Legendre transform; see~\cite[Chapter 3]{Shen-lectures} and the discussion in~\cite[Section 2.1.1]{FL}.
\par Equivalently,
\begin{align*}
	\Hess_F \sigma - d\sigma\otimes d\sigma \circ \ident\times\ident = f\ident, \quad \text{on} \quad SM,
\end{align*} 
where $\Hess_F$ is the Finslerian Hessian; see~\cite[Chapter 14]{Shen-lectures}. Another equivalent formulation is
\begin{align*}
	{^{\sf horiz}\nabla}^2 \sigma -  d\sigma^2 = fg, 
\end{align*}
where ${^{\sf horiz}\nabla}$ means horizontal part of the Cartan (or Chern) connection. 
\par The salar function $f$ on the RHS of \eqref{eq:Fins-geom-pde} has the geometric formulation
\begin{align*}
	f(x) = \sigma''\restr_{\nicefrac{\y}{F(\y)}} - F(\grad \sigma)^2 + e^{2\sigma}\widebar{\kappa}^2\\
\end{align*}
where, $\sigma''\restr_{\nicefrac{\y}{F(\y)}}$ here means second derivative in the direction of the unit speed geodesic with initial velocity $\nicefrac{\y}{F(\y)}$ (indeed the Hessian) and $\widebar{\kappa}$ is the curvature of this geodesic w.r.t. $\widebar{F}$; for more detials, see~\cite{FL}.
\begin{remark}
	The local version of this PDE had been established in~\cite{SY} as the PDE system
	\begin{align*}
		\rho_{_{i|j}} = \left(e^{-\sigma}\right)_{i|j} = \lambda g_{ij},
	\end{align*}
where ``${}_{|}$'' denotes horizontal covariant differentiation w.r.t. the Cartan or Chern connection.
\end{remark}
	\par The importance of the coordinate-free formulation as presented in \cite{FL} is mainly the explicit geometric description of the scalar function $f$ in terms of first curvatures of paths. Indeed, also we have
	\begin{align}\label{eq:curv-disc}
		f(x)=  \sigma'' + \left< c'', \grad^{c'}\sigma \right>_{c'} - \|\grad^{c'} \sigma\|_{c'}^2 + e^{2\sigma}\widebar{\kappa}^2 -  \kappa^2,
	\end{align}
	in which $c(s)$ is an arbitrary unit speed (w.r.t. $F$) geodesic circle; see \cite{FL} for details.
	\par \emph{Notice $e^{2\sigma}\bar{\kappa}^2$ would coincide with $\kappa^2$ under a homothety. Therefore, the curvature discrepancy $e^{2\sigma}\bar{\kappa}^2 - \kappa^2$ appearing in \eqref{eq:curv-disc} measures how far the transformation is from being a homothety along a geodesic circle $c$.}
	\par \emph{For instance, in~\cite{FL}, taking advantage of this geometric formulation of $f$, the authors showed only normal geodesics are mapped to geodesics (integral curves of $\grad \sigma$) and consequently, other geodesics must map to pure circles (geodesic circles with $\kappa>0$). }
\subsubsection*{\small \sf \textbf{Rigidity in the presence of critical points}}
\begin{theorem*}\label{thrm:main}
Suppose $(M,F)$ is a forward or backward complete Berwaldian, or reversible Finslerian manifold that admits a non-trivial conformal {\cpt}, $\bar{F} = e^{\sigma} F$. If the critical set of $\sigma$ is non-empty, then $(M,F)$ is a Riemannian manifold. 
\end{theorem*}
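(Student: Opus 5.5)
Work with $\rho=e^{-\sigma}$ and the local form $\rho_{|i|j}=\lambda g_{ij}$. The left side is the horizontal Hessian of a scalar, which depends on the direction $\y$ only through the connection coefficients. The right side is $\lambda(x)\,g_{ij}(x,\y)$, with $\lambda$ a function of $x$ alone. The plan is to show that $g_{ij}(x,\y)$ cannot depend on $\y$ at any point where $\lambda(x)\neq 0$, and then to use the critical point together with completeness to show that such points fill an open dense set.

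\textbf{Step 1 (vertical derivative).} Differentiate $\rho_{|i|j}=\lambda g_{ij}$ vertically in $\y^k$. In the Berwald case the horizontal Hessian $\partial_i\partial_j\rho-\Gamma^k_{ij}(x)\partial_k\rho$ is independent of $\y$. Hence $\lambda\,\partial g_{ij}/\partial \y^k=2\lambda C_{ijk}=0$, and $C\equiv 0$ wherever $\lambda\neq0$. By Deicke's theorem the metric is Riemannian there. For a general reversible $F$ with the Chern connection, the same differentiation gives
\begin{align*}
-\Gamma^k_{ij;\,l}\,\rho_k=2\lambda C_{ijl}
\end{align*}
(vertical derivative of $\Gamma$), which relates $\lambda C$ to the Landsberg-type tensor contracted with $d\rho$. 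I would contract with $\y$ and use the homogeneity identities $C_{ijl}\y^l=0$ and $P_{ijl}\y^l=0$. Along the gradient direction $\y=\grad\rho$, where $d\rho$ is proportional to $\y^\flat$, this should force $\lambda C=0$. I then plan to propagate to all directions using reversibility: $\y$ and $-\y$ give the same $g$, so odd parts of the relation cancel.

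\textbf{Step 2 (structure near a critical point).} Let $p$ be a critical point of $\sigma$, hence of $\rho$. Along a unit-speed geodesic $\gamma$ from $p$, define $h(s)=\rho(\gamma(s))$. The equation gives $h''=\lambda(\gamma(s))$, and the integrability conditions (as in the Riemannian Tashiro/Kühnel analysis, cf.~\cite{FL}) give $\lambda$ as a function of $\rho$ on each level set. Thus $\rho$ is radial around $p$: $\rho=\rho(r)$ with $r$ the forward (or backward) distance, defined globally by completeness via the exponential map. If $\lambda(p)=0$, the ODE $h'''=-Kh'$ type structure (from the derived equation $\lambda'=-\mathrm{Ric}$-like term) together with $h'(0)=0$ would force $h$ to be constant along every geodesic from $p$, so the map is trivial. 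Hence non-triviality gives $\lambda(p)\neq0$. Then $\lambda\neq0$ on a neighborhood of $p$, and by analyticity of the ODE along radial geodesics, $\lambda$ vanishes only on isolated level spheres $r=\mathrm{const}$. The zero set of $\lambda$ is therefore nowhere dense.

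\textbf{Step 3 (conclusion).} By Step 1, $C=0$ on the open dense set $\{\lambda\neq0\}$. By continuity $C\equiv0$ on $SM$, so $F$ is Riemannian. The main obstacle is Step 1 in the non-Berwald reversible case: there the Chern connection depends on $\y$, and eliminating the Landsberg term requires choosing directions cleverly. If that fails, I would instead try to prove first that $F$ is Berwald near $p$. The idea is that radial geodesics are mapped to geodesics (the normal geodesics of~\cite{FL}), and the level sets of $\rho$ are totally umbilic. I would then reduce to the Berwald case.
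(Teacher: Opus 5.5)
\textbf{There is a genuine gap: the reversible, non-Berwald half of the theorem is not proved, and the density claim in Step~2 is false.}

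Your Step~1 is correct in the Berwald case: there $\Gamma^k_{ij}=\Gamma^k_{ij}(x)$, so $\lambda(x)g_{ij}(x,y)$ is $y$-independent. For a general reversible $F$, however, the proposed mechanism does not produce $\lambda C=0$. Start from
\begin{align*}
-\rho_k\,\partial_{y^l}\Gamma^k_{ij}=2\lambda C_{ijl}.
\end{align*}
Each of your manipulations of this identity gives nothing:
\begin{itemize}
\item Contracting with $y^l$ gives $0=0$ by homogeneity.
\item Contracting with $y^i$ kills the right side. It leaves only $\rho_k\,y^i\partial_{y^l}\Gamma^k_{ij}=0$, a constraint on the Landsberg tensor in the gradient slot that says nothing about $\lambda C$.
\item At $y=\grad\rho$, the identity merely equates $y_k\partial_{y^l}\Gamma^k_{ij}$ (the Landsberg tensor, up to normalization) with a multiple of $\lambda C_{ijl}$. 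This forces neither to vanish.
\item For reversible $F$, $g_{ij}$ and $\Gamma^k_{ij}$ are even in $y$. Hence $C_{ijl}$ and $\partial_{y^l}\Gamma^k_{ij}$ are both odd, and the parity argument cancels nothing.
\end{itemize}
Your fallback (``prove $F$ is Berwald near $p$'') comes with no mechanism.

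What is missing is the idea the paper actually uses. Near $p$ the metric splits as $g=du^2+\rho_u^2\,g_{\sf ideal}$, with $g_{\sf ideal}$ independent of $u$ and $y^1$. The level sets near $p$ are small geodesic spheres $\cong\Sp^{n-1}$. Boundedness of $\|\Riem\|$ as $r\downarrow0$ forces $g_{\sf ideal}$ to have constant flag curvature $\rho_{uu}(0)^2>0$. Kim--Min's theorem (reversible with constant positive flag curvature implies Riemannian) then gives $C_{\alpha\beta\gamma}=0$. Together with $C_{1ij}=0$ and the discreteness of the critical set, this yields $\mathbf{C}\equiv0$. So the reversible case rests on a rigidity theorem for the level spheres, which your proposal neither invokes nor replaces.

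Second, the claim in Step~2 that $\{\lambda=0\}$ is nowhere dense is false, so Step~3 does not close even for Berwald metrics. No analyticity is available: the radial equation is $\rho_{uuu}=-\K\rho_u$, and $\K$ is a merely smooth flag curvature. For a counterexample, on $\R^n$ take $g=dr^2+\phi(r)^2g_{\Sp^{n-1}}$, where $\phi$ extends to a smooth odd function, $\phi'(0)=1$, $\phi>0$ on $(0,\infty)$, and $\phi\equiv1$ on $[a,b]$. Then $\rho=c+\int_0^r\phi$ with $c>0$ satisfies $\nabla^2\rho=\phi'(r)\,g$. This is a complete Riemannian (hence Berwald and reversible) example with a critical point at the origin, yet $\lambda=\phi'\equiv0$ on the annulus $a<r<b$.

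For Berwald metrics the repair is easy, and it gives a genuinely shorter route than the paper's. On a connected Berwald manifold, parallel transport of the $y$-independent connection is a linear isometry $(T_xM,F_x)\to(T_{x'}M,F_{x'})$. So being Euclidean at a single point forces $F$ to be Riemannian everywhere. Moreover $\lambda\not\equiv0$. Otherwise, take any geodesic $\gamma$ joining $p$ to another point (Hopf--Rinow). Along it $(\rho\circ\gamma)''=\lambda F(\dot\gamma)^2=0$ and $(\rho\circ\gamma)'=0$ at $p$, so $\rho$ is constant and the map is a homothety. This bypasses the paper's detour through constant positive flag curvature of $g_{\sf ideal}$ and the Berwald rigidity results of \cite{BHS,TN}. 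It does nothing, however, for the reversible, non-Berwald case.
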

Therefore, the Riemannian theory (see the beginning of the this section) can be applied, providing:
\begin{corollary*}\label{cor:main}
	Suppose $(M,F)$ is a forward or backward complete Berwaldian, or reversible Finslerian manifold that admits a non-trivial conformal {\cpt}, $e^{\sigma}F$, where $\sigma$ has at least one critical point. Then, $\sigma$ has no more than 2 critical points and $(M,F)$ is conformally diffeomorphic to either 
\begin{itemize}
	 \item the standard sphere ($\sigma$ has 2 critical points),
	 \smallskip
	 \item [] or,
	 \smallskip
	 \item the Euclidean space ($\sigma$ has 1 critical point),
	  \item [] or, 
	  \smallskip
	 \item  the hyperbolic space ($\sigma$ has 1 critical point). 
\end{itemize}
\end{corollary*}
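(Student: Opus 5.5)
The corollary should follow from the Theorem stated above combined with the classical Riemannian classification recalled at the beginning of this section; no new Finslerian analysis should be needed. I would proceed in three steps.

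\emph{Step 1: reduce to the Riemannian setting.} Under the hypotheses, with $\sigma$ having a non-empty critical set, the Theorem says $F$ is Riemannian, i.e. $F(x,\y)=\sqrt{g_x(\y,\y)}$ for a Riemannian metric $g$. I would then check how each ingredient specializes.
\begin{itemize}
	\item The fundamental tensor $g^{\y}$ no longer depends on $\y$.
	\item The Cartan (and Chern) connection reduces to the Levi-Civita connection of $g$.
	\item The Legendre-transform gradient becomes the usual Riemannian gradient.
	\item Horizontal lifts disappear from the formulas.
\end{itemize}
Consequently \eqref{eq:Fins-geom-pde} becomes $\nabla^2\sigma - d\sigma\otimes d\sigma = f g$. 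Tracing gives $f=\nicefrac{1}{n}(\Delta\sigma-\|\nabla\sigma\|^2)$, which is exactly \eqref{eq:pde-sigma}. Equivalently, with $\rho=e^{-\sigma}$ one has $\nabla^2\rho = -\rho f\, g$, which is the system $\rho_{i;j}=\lambda g_{ij}$ with $\lambda=-\rho f$.

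\emph{Step 2: transfer completeness and non-triviality.} A Riemannian metric is reversible, so forward and backward completeness both coincide with ordinary metric (equivalently geodesic) completeness of $g$. Since the transformation is not a homothety, $\lambda$ is not constant; in particular $\sigma$ is non-constant and $\nabla\rho\not\equiv 0$. We are therefore exactly in the setting of the Tashiro--Kanai--K\"uhnel classification \cite{Tash1,Tash2,Kan,Kuh}: a complete Riemannian manifold carrying a non-trivial solution $\rho$ of $\nabla^2\rho=\lambda g$.

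\emph{Step 3: read off the classification.} The critical points of $\rho$ and of $\sigma$ coincide, because $d\rho=-\rho\, d\sigma$ and $\rho>0$. By that theory there are at most two critical points. The case of no critical points, which yields a warped product over $\R$, is excluded by hypothesis, leaving two cases.
\begin{itemize}
	\item \emph{Two critical points.} $M$ is compact: level sets of $\rho$ are distance spheres around one critical point, and the second critical point closes them up. Then $(M,g)$ is conformally diffeomorphic to the standard sphere.
	\item \emph{Exactly one critical point.} $M$ is non-compact, and $(M,g)$ is conformally diffeomorphic to Euclidean space or hyperbolic space. Which one occurs depends on the behaviour of the ODE $\rho''=\lambda(\rho)$ along normal geodesics from the critical point.
\end{itemize}
Finally, since $F=\sqrt g$, a conformal diffeomorphism of $(M,g)$ is the same as one of $(M,F)$.

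The only point needing real care is Step 1: verifying that, once $F$ is known to be Riemannian, the objects in \eqref{eq:Fins-geom-pde} (the Legendre gradient and the horizontal covariant derivative of the Cartan connection) genuinely reduce to their Riemannian counterparts. This is where the Finslerian PDE is identified with the classical $\rho_{i;j}=\lambda g_{ij}$. The heavy lifting is done by the Theorem itself and by the cited Riemannian results.
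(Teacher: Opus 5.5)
Your proposal is correct and follows the paper's route: the Theorem forces $F$ to be Riemannian, so the Finslerian PDE becomes the classical system $\rho_{i;j}=\lambda g_{ij}$, and the conclusion is then read off from the Tashiro--Kanai--K\"uhnel classification recalled in the introduction. The paper does this in two lines; you additionally spell out the reduction of the PDE, the transfer of completeness, and the fact that $\rho$ and $\sigma$ have the same critical points.

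One small correction: non-triviality only gives a non-constant $\rho$ (which is all the classification needs), not a non-constant $\lambda$. On flat $\R^n$, the function $\rho=\tfrac12|x|^2+1$ has $\lambda\equiv 1$, yet it gives a non-homothetic conformal map onto a punctured round sphere. This is precisely the Euclidean one-critical-point case, so you should drop that claim.
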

\section{Preliminaries}
\par In these notes, we are concerned with smooth manifolds $M^{n}$ equipped with a Finslerian fundamental function $F(x,{\y})$ ($(x,{\y}) \in T_xM$). To $F$, one assigns $\y$-dependent metric tensor ${g}_{ij}(x,\y)$ and the Cartan tensor $\mathbf{C}(x,\y)$ with coefficients $C_{ijk}(x,\y)$. 
\par \dunderline{1.2pt}{\small \textit{\textbf{\textsf{Terminology:}}}} \textsl{With a slight abuse of terminologies, both a Finslerian connection and its corresponding covariant differentiation $\nabla$ will be referred to as a connection.}
\subsection{First order calculus}\label{subsec:first-ord-calc}
\par The geodesic spray coefficients and the nonlinear connection coefficients are given by 
\begin{align*}
	G^s = \nicefrac{1}{4} \; g^{sl}\left( y^k\dt{\partial}_l\partial_k\left(F^2\right) - \partial_l\left(F^2\right)\right),\quad G^i_j := \dt{\partial}_j G^i.
\end{align*}
\par The vectors 
\begin{align*}
	\delta_i := \partial_i - G_i^r\dt{\partial}_r, \quad \text{and} \quad \dt{\partial}_j := \partial_{y^j},
\end{align*}
span the horizontal sub-bundle, $\mathcal{H}_{(x,\y)}$ of $TTM$, and the vertical sub-bundle, $\mathcal{V}_{(x,\y)}$ respectively. For a vector $V \in TTM$, $hV$ and $vV$ denote the horizontal and vertical parts, respectively. The horizontal lift of a tangent vector $X = X^i\partial_i$ is given by $X^h = X^i\delta_{i}$. 
\par It is standard that 
\begin{align*}
	[\delta_i,\delta_j] = {\frak R}^k_{\;ij} \dt{\partial}_k,
\end{align*}
where
\begin{align}\label{eq:torsion-coeff}
	{\frak R}^k_{\;ij} = \delta_j G^k_i - \delta_i G^k_j,
\end{align}
is the $R^1$-torsion vector; see~\cite[(2.3.2.5)]{AIM}. 
In particular,
$
h\left[ \delta_i, \delta_j \right] = 0
$
\par Consequently,
\begin{align}\label{eq:torsion}
	[X^h,Y^h] = [X,Y]^h + {\frak R}(X,Y), \quad \text{where} \quad  {\frak R}(X,Y) = X^iY^j{\frak R}^k_{\;ij}\partial_k.
\end{align}
\subsubsection{\small \sf \textbf{The Cartan connection}}\label{subsec:cartan}
The Cartan connection (covariant differentiation) is defined as 
\par \begin{align*}
	\nabla_{{X}} \partial_i := \left( \overstar{\Gamma}^i_{jk} dx^j + C_{ij}^k\delta y^j \right)(X)  \; \partial_k, \quad X \in T\left(TM_0\right);
\end{align*}
More explicitly,
$
\nabla_{\delta_j} \partial_i = \overstar{\Gamma}^k_{ij} \partial_k
$
and
$
\nabla_{\dt{\partial}_j} \partial_i = C_{ij}^k \partial_k
$
where,
\begin{align*}
	\overstar{\Gamma}^k_{ij} := \nicefrac{1}{2}\; g^{kl}\left( \delta_i g_{jl} + \delta_j g_{il} - \delta_l g_{ij} \right).
\end{align*}
\par By a straightforward computation, we have
\begin{align}\label{eq:Chris-symbs}
	\overstar{\Gamma}^k_{ij} - \Gamma^k_{ij} =  g^{kl}\left(C_{ijr}G^r_l - C_{ljr}G^r_i - C_{lir}G^r_j\right);
\end{align}
e.g. see~\cite[2.4.9]{BCS}.
\par Key features of the Cartan connection include metric compatibility i.e. $^{\sf horiz}\nabla g = {^{\sf vert}\nabla} g = 0$, $h$-symmetricity, $v$-symmetricity and the fact that it has 3 non-zero torsion tensors (out of 5); for more details on the Cartan connection, see e.g.~\cite[Section 2.4.3]{AIM} and~\cite[Chapter II]{AZbook}.
\subsubsection{\small \sf \textbf{The Chern connection}}
The Chern connection is given by
$
\overset{\sf Ch}{\nabla}_{\delta_j} \partial_i = \overstar{\Gamma}^k_{ij} \partial_k
$
and
$
\overset{\sf Ch}{\nabla}_{\dt{\partial}_j} \partial_i = 0.
$
key features of the Chern connection include complete torsion-free ness and almost metric compatibility; see~\cite[Chapter 2]{BCS}. 
\par Since horizontally, Chern and Cartan connections coincide, in computations that are done using the Cartan connection and using only horizontal symmetry (as long as metric compatibility is not used), can be turned into computations in terms of the Chern connection by formally substituting $0$ for $\mathfrak{R}$. 
\subsection{Second order calculus}
\subsubsection{\small \sf \textbf{$\pmb{hh}$-curvature tensor}}\label{subsubsec:hh-curv}
In these notes, we are concerned with the Riemannian-like curvature tensor (as opposed to purely Finslerian curvature tensors) of a Finslerian connection $\nabla$; this curvature tensor is known as the $hh$-curvature. For the Cartan connection, we denote the $hh$-curvature by $\Riem^{\y}$, thus
\begin{align*}
	\Riem^{\y}(X,Y)Z := {\Omega}\left(X^h, Y^h  \right)Z,
\end{align*}
where $\Omega$ denotes the usual curvature form of the bundle $\mathcal{V}(M)$ over $TM_0$ (the slit tangent bundle) w.r.t. the Cartan connection $\nabla$ (as a Finslerian connection on $\mathcal{V}(M)$); for details see~\cite{AZbook}. Similarly, one also defines the $hh$-curvature tensor $\mathcal{R}iem$ corresponding to the Chern connection. 
\par In coordinates, following the index convention in~\cite{BCS}, we set
\begin{align*}
	\Riem(\partial_j,\partial_k)\partial_h = \Riem^{\;\;i}_{h\;\;jk}\partial_i,
\end{align*}
and similarly for $\mathcal{R}iem$. 
(c.f. \cite{AIM} and \cite{BF}, where the latter is $\Riem^{\;\;i}_{h\;\;kj}\partial_i$; hence, a sign difference).
\par The curvature symmetries that merely rely on metric compatibility and horizontal symmetry will still hold true for the $hh$-curvature tensor of the Cartan connection; in particular,
\begin{align*}
	\left<{\Riem^{\y}}(X,Y)Z,W\right>_{\y} = - \left<{\Riem^{\y}}(X,Y)W,Z \right>_{\y}. 
\end{align*}
and the Bianchi identity holds as well. As a result, one also has
\begin{align}\label{eq:curv-op-self-adj}
	\left<{\Riem^{\y}}(X,Y)Z,W\right>_{\y} = \left<{\Riem^{\y}}(Z,W)X,Y\right>_{\y}. 
\end{align}
The two curvature tensors $\Riem$ and $\mathcal{R}iem$ are related via
\begin{align*}
	\Riem(X,Y)Z = \mathcal{R}iem(X,Y)Z + \mathcal{S}(X,Y)Z,
\end{align*}
where
\begin{align*}
	S(X,Y)Z =  -\nabla_{{\frak R}(X,Y)}Z,
\end{align*}
equivalently, in coordinates,
\begin{align*}
	\mathcal{S}(\partial_k,\partial_j)\partial_h = - C^i_{hr} {\frak R}^{r}_{\;\;kj}.
\end{align*}
\subsubsection{\small \sf \textbf{The flag curvature}}
\par A flag is a product $v\wedge w$ ($v,w$ are independent tangent vectors) for which $v$ is called the flagpole. The flag curvature $\K(v\wedge w)$ is the sectional curvature ${\mathrm sec}(v,w)$ w.r.t. the osculating metric $g^V$, where $V$ is a vector field that extends $v$; see~\cite[Theorem 5.12]{Oh}. $\K(v\wedge w)$ is given by
\begin{align*}
	\K(\vecv \wedge \vecw) = \frac{\left<\Riem^{\vecv}\left(\vecv, \vecw\right)\vecw, \vecv \right>_{\vecv}}{\|\vecv\|_{\vecv}^2\|\vecw\|^2_{\vecv} - \left< \vecv, \vecw \right>_{\vecv}^2},
\end{align*}
and the same formula holds for $\K(\vecv\wedge \vecw)$ in terms of $\mathcal{R}iem$ as well.
\par Alternatively, for $\y$-dependent orthonormal frame $\left\{e_i\right\}$, $\K(e_i\wedge e_j)$ are eigenvalues of $-\Riem$ as a symmetric operator on $\bigwedge^2 \mathcal{V}M$. 
\subsubsection{\small \sf \textbf{Extrinsic and intrinsic geometry of hypersurfaces}}\label{subsubsec:hypersurface}
Let $\Sigma \subset M$ be a smooth hypersurface with normal ${\sf n}$. For $\y \in T_x\Sigma$, the normal bundle $\mathcal{V}_{(x,\y)}$ is a co-rank $1$ sub-bundle of the vertical bundle $\widetilde{\mathcal{V}}_{(x,\y)}$. Locally, a normal to $\Sigma$ naturally lifts to a complementary normal directed line bundle to $\mathcal{V}$, which can be taken to be the vertical lift, ${\sf n}^v$, of the normal ${\sf n}$. 
\par Let us denote the coordinates of the hypersurface by $u^2,\cdots,u^{n}$ signified by lower case Greek letter indices (as is customary). Denote the corresponding tangent coordinates by $v^\alpha$. Set $B^i_\alpha := \nicefrac{\partial x^i}{\partial u^\alpha}$. The Finslerian unit normal is then ${\sf n}^v = b^i\dt{\partial}_i$ that corresponds to the unit normal ${\sf n} = b^i{\partial}_i$ to the hypersurface. 
\par The ambient and induced double tangent frames are related via
\begin{align*}
	\dt{\partial}_{j} = {\mathcal{B}}^\alpha_j \, {^\Sigma\dt{\partial}}_\alpha + {\sf N}_j{\sf n}^v, \quad \text{and} \quad {^\Sigma\delta}_\alpha = B^i_\alpha \delta_i + H_\alpha {\sf n}^v,
\end{align*}
where
\begin{align*}
	{\mathcal{B}}^\gamma_i = g_{ij} B^j_\beta (^\Sigma g)^{\beta \gamma}, \quad \text{and} \quad {\sf N}_i = g_{ij} {b}^j.
\end{align*}
\par The coefficients of the nonlinear connection induced on $\Sigma$ are given by
\begin{align*}
	{^\Sigma G}^\beta_\alpha = {\mathcal{B}}^\beta_i \left( \left(\partial_\gamma B^i_{\alpha}\right)v^\gamma  + B^j_\alpha G^i_j \right).
\end{align*}
The second and third Rund-Brown tensors that play an important role in determining the induced connection and extrinsic curvature of $\Sigma$ are defined by
\begin{align*}
	M_\alpha := {C}_{ijk}\mathsf{N}^i\mathsf{N}^jB^k_\alpha, \quad \text{and} \quad	M_{\alpha\beta} := C_{ijk}B^i_\alpha B^j_\beta \; b^k.
\end{align*}
\par For more details on the second fundamental form and the Gauss-Codazzi equations in the Finslerian setting, as well as the type of the induced connection on a hypersurface, we refer to the original source~\cite{Mat1}; also to~\cite[Chapters 2 and 5]{BF} for a more detailed exposition.
\section{Highlights of the geometry of the underlying space}\label{sec:highlights}
Suppose $(M^n,F)$ is a Finslerian manifold that admits a non-trivial conformal {\cpt}. In this section we briefly mention recent developments about the geometry of $(M,F)$ as have been obtained in~\cite{FL}. 
\par Throughout these notes, $\Sigma$ denotes a regular level hypersurface of $\sigma$, either a portion thereof or the whole. 
\par On the regular domain, the integral curves of $\grad \sigma$ (once parametrized by arc-length) are geodesics and we refer to them as normal geodesics; see~\cite[Theorem 3.4]{FL}.
\par Take $u^1 = u$ to be the arc-length parameter along the normal geodesic curves. Choose any coordinate system $u^2,\cdots, u^{n-1}$ on the regular hypersurface $\sigma^{-1}(c)$. Extend these to a neighborhood of the point by taking them to be constant along the normal unit speed geodesic curves. This provides us with a system of local coordinates $(u^1 = u,u^2, \cdots, u^n)$ where $u^2, \cdots, u^n$ form coordinates for $\Sigma_x$ and $u$ is the arc-length of the normal geodesics, i.e., the coordinate $u$ is the one that integrates 
\begin{align*}
	\partial_u := \nicefrac{\grad^{\y} \sigma}{F(\grad^{\y} \sigma)} = \nicefrac{\grad \sigma}{F(\grad \sigma)}.
\end{align*}
\par \emph{The flow of the vector field $\grad \sigma$ is a local diffeomorphism. For any $s_1 \le s_2$ with $\sigma^{-1}([s_1,s_2]) \cap {\sf Crit}_{\sigma} = \varnothing$, the flow of $\grad \sigma$ becomes a diffeomorphism between any $\sigma^{-1}(t_1)$ and $\sigma^{-1}(t_2)$ for $t_1,t_2 \in (s_1,s_2)$. This means any chosen local coordinate system $(u^2, \cdots, u^n)$ for an open domain $U \subset \Sigma_{t}$ at time $t \in (s_1,s_2)$ can be extended to a coordinate system $u^1=u, u^2, \cdots, u^n$ on the open domain $(s_1,s_2)\times \Sigma_t$}.
\par \dunderline{1.2pt}{\small \textit{\textbf{\textsf{Notation:}}}} \textsl{Just as in~\cite{FL}, throughout these notes, the small case Greek letters signify the indices $2,\cdots,n$.}.
\par Obviously, locally, the distance of $u$-levels is determined by the difference in the value of $u$. Clearly $u$-levels correspond to the level hypersurfaces of $\sigma$ ($\sigma$-levels). Therefore, locally, $\sigma$ is a function of $u$. So we write $\sigma(u)$. Clearly
$
g^{\sf n}(\partial_1, \partial_1) = 1
$
and
$
g^{\sf n}(\partial_1, \partial_{\alpha}) = 0
$
i.e. $\partial_u = \partial_1 = {\sf n}$. 
\par In these coordinates, the following hold true. 
\begin{enumerate}
	\item \label{item:highlight-0} $f$ takes the form
	\begin{align*}
		f = \sigma_{uu} - \sigma_u^2 = -\nicefrac{\rho_{uu}}{\rho}.
	\end{align*}
	\smallskip
	\item \label{item:highlight-1} $g_{1\alpha}(\y) \equiv 0$, $g_{_{11}} \equiv 1$.
	\smallskip
	\item \label{item:highlight-2} $F^2(\grad \sigma) = \left< \grad \sigma,\grad \sigma \right>_{\sf n}$ is locally constant along $\Sigma$.
	\smallskip
	\item \label{item:highlight-3} $\nabla_{\dt{\partial}_j} \grad^{c'}\sigma = 0$.
	\smallskip
	\item \label{item:highlight-4} $C_{111} = C_{11\alpha} = C_{1\alpha\beta} = 0$.
	\smallskip
	\item \label{item:highlight-5} $G^s(\sf n) = 0$ for all $1\le s \le n$. 
	\smallskip
	\item\label{item:highlight-6}  $\dt{\partial}_u G^i \equiv 0$ i.e. $G^i_1\equiv 0$.
	\smallskip
	\item \label{item:highlight-7}
	$\dt{\partial}_u G^i_j \equiv 0$.
	\smallskip
	\item \label{item:highlight-8}
	The symbols $\overstar{\Gamma}^i_{jk}$ satisfy
	\begin{align*}
		\overstar{\Gamma}^1_{11} = \overstar{\Gamma}^1_{1\alpha} = \overstar{\Gamma}^\alpha_{11} =  0, \quad 
		\overstar{\Gamma}^\alpha_{1 \beta} = - \left(\nicefrac{\rho''}{\rho'}\right) \delta^\alpha_\beta, \quad \text{and} \quad
		\overstar{\Gamma}^1_{\alpha \beta} = \left(\nicefrac{\rho''}{\rho'}\right)g_{\alpha\beta}.
	\end{align*}
	\smallskip
	\item \label{item:highlight-9}
	$
	\partial_u g_{\alpha \beta} = 2 \left(\nicefrac{f}{\|\grad \sigma\|}\right)g_{\alpha\beta}.
	$ 
\end{enumerate}
For more details and the proofs, see~\cite[Theorems 3.4 and 4.1]{FL}. 
\par \emph{Another important fact established in \cite[Theorem~4.2]{FL} is that the set of critical points of $\sigma$, ${\sf Crit}$, is a discrete set, i.e., critical points are isolated.} 
\section{Local decomposition of the regular domain}
\begin{lemma}\label{lem:metric-decomp}
	Suppose $\bar{F} = e^{\sigma} F$ is a non-trivial {\cpt}. Then, locally on the regular domain of $\sigma$, $g(x,\y)$ decomposes as
	\begin{align}\label{eq:metric-decomp}
		g(x,\y) = du^2 + (e^{-\sigma}\sigma_u)^2\; {g}_{\sf ideal}(x,\vecv),\quad \vecv = (y^1,\cdots,y^{n-1}),
	\end{align}
	for a $(u^1,y^1)$-independent Finslerian metric ${g}_{\sf ideal}$ on $\Sigma$.  In other words, the isometric product 
	\begin{align*}
		F^2 = F_1^2 + (e^{-\sigma}\sigma_u)^2 F_\Sigma^2,
	\end{align*}
	holds true where $F_1$ is the standard length on $I$ and $F_\Sigma$ is a constant Finslerian norm on $\Sigma$. Clearly, 
	\begin{align}\label{eq:g-level}
	g_{\sf level} := (e^{-\sigma}\sigma_u)^2\; {g}_{\sf ideal}, 	
	\end{align}
	must be the Finslerian metric induced by $F$ on $\Sigma$. 
	\par Furthermore, for any regular value $\sigma(s_0)$, the warped product decomposition \eqref{eq:metric-decomp} holds on the entire domain $(s_0-\epsilon, s_0 +\epsilon) \times \sigma^{-1}(s_0)$. 
\end{lemma}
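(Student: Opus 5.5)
We work in the adapted coordinates $(u^1=u,u^2,\dots,u^n)$ of \secref{sec:highlights}, with tangent coordinates $(y^1,\dots,y^n)$; the $\vecv$ in \eqref{eq:metric-decomp} is understood as the $\Sigma$-directional part of $\y$ (the components $y^\alpha$, $\alpha\ge 2$). The plan has three steps: decouple the $du^2$ part, establish the conformal scaling along $u$, and remove the $y^1$-dependence.

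\textbf{Step 1: splitting off $du^2$.} By item~\eqref{item:highlight-1}, $g_{11}\equiv 1$ and $g_{1\alpha}\equiv 0$ for every $\y$. Hence
\begin{align*}
F^2(x,\y) = g_{ij}(x,\y)y^iy^j = (y^1)^2 + g_{\alpha\beta}(x,\y)y^\alpha y^\beta,
\end{align*}
and $g(x,\y)=du^2+g_{\alpha\beta}(x,\y)\,du^\alpha du^\beta$.

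\textbf{Step 2: the $u$-dependence.} Item~\eqref{item:highlight-9} gives $\partial_u g_{\alpha\beta}=2(f/\|\grad\sigma\|)g_{\alpha\beta}$. Locally $\sigma=\sigma(u)$, $\|\grad\sigma\|=\sigma_u$ (choosing the orientation with $\sigma_u>0$), and $f=\sigma_{uu}-\sigma_u^2$ by item~\eqref{item:highlight-0}. Then
\begin{align*}
\frac{f}{\sigma_u}=\frac{\sigma_{uu}}{\sigma_u}-\sigma_u=\partial_u\ln\!\left(e^{-\sigma}\sigma_u\right).
\end{align*}
Integrating this linear ODE in $u$ for fixed $(u^\alpha,\y)$ yields
\begin{align*}
g_{\alpha\beta}(u,u^\gamma,\y)=(e^{-\sigma}\sigma_u)^2(u)\,h_{\alpha\beta}(u^\gamma,\y),
\end{align*}
where $h_{\alpha\beta}$ is independent of $u$. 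Item~\eqref{item:highlight-9} is itself justified by the Christoffel symbols in item~\eqref{item:highlight-8} together with metric compatibility.

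\textbf{Step 3: removing $y^1$.} This is the main obstacle. I would use two ingredients. First, $\dt{\partial}_1 g_{ij}=2C_{ij1}$, which vanishes for all index pairs by item~\eqref{item:highlight-4} and the total symmetry of $C$. Second, items~\eqref{item:highlight-1} and~\eqref{item:highlight-4} are stated for arbitrary $\y$, not only for $\y={\sf n}$, so this vanishing holds on the whole slit tangent space. Integrating in $y^1$ then gives $h_{\alpha\beta}=h_{\alpha\beta}(u^\gamma,y^2,\dots,y^n)$. Consequently $F_\Sigma^2:=h_{\alpha\beta}(u^\gamma,\vecv)v^\alpha v^\beta$ is a function on $T\Sigma$, and $g_{\sf ideal}:=h$ is its fundamental tensor: positive-homogeneity and strong convexity are inherited by restricting $F$ to the subspace $y^1=0$.

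The delicate point is the range of validity. Where $\vecv=0$, the flagpole is parallel to ${\sf n}$; there the formula reads $F^2=(y^1)^2$, which is consistent. This also gives \eqref{eq:g-level}, since setting $y^1=0$ recovers the metric induced by $F$ on $\Sigma$.

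\textbf{Extension to the slab.} For the last claim, use the remark that the flow of $\grad\sigma$ is a diffeomorphism between level sets in $\sigma^{-1}((s_0-\epsilon,s_0+\epsilon))$ when that slab avoids ${\sf Crit}_\sigma$. This is possible because critical points are isolated and $\sigma(s_0)$ is regular, and $\epsilon$ is shrunk if needed. Cover $\sigma^{-1}(s_0)$ by coordinate patches and transport each along normal geodesics. On overlaps, $g_{\sf ideal}$ is defined intrinsically as $(e^{-\sigma}\sigma_u)^{-2}$ times the induced metric, so the local pieces agree. The factor $\sigma_u$ is constant on each level by item~\eqref{item:highlight-2}, so the decomposition is global on $(s_0-\epsilon,s_0+\epsilon)\times\sigma^{-1}(s_0)$.
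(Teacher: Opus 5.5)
Your proposal is correct and is essentially the paper's proof. You split off $du^2$ via $g_{11}\equiv1$, $g_{1\alpha}\equiv0$, and integrate $\partial_u g_{\alpha\beta}=2(f/\sigma_u)g_{\alpha\beta}$ through $f/\sigma_u=\partial_u\ln(e^{-\sigma}\sigma_u)$; the paper does the same thing by checking $\partial_u\bigl(g_{\alpha\beta}/(e^{-\sigma}\sigma_u)^2\bigr)=0$. You then remove $y^1$ with $C_{1\alpha\beta}\equiv0$ and extend along $\sigma^{-1}(s_0)$ by the coordinate-independence of $g_{\sf ideal}$, exactly as the paper does.

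One remark on your \emph{delicate point}. The real issue at $\vecv=0$ is not whether $F^2=(y^1)^2$ but whether $g_{\alpha\beta}$ is continuous at $\y={\sf n}$. Since $g_{\alpha\beta}$ at $\y=(1,tw)$ equals $(e^{-\sigma}\sigma_u)^2g^{\sf ideal}_{\alpha\beta}(w)$ for every $t>0$, letting $t\downarrow0$ shows that $g_{\sf ideal}$ is independent of $\vecv$. So the cited items, used for all $\y$ as both you and the paper do, already force $F$ to be Riemannian on the regular domain.
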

\begin{proof}[\footnotesize \textbf{Proof}]
	By \cite[Theorem 3.4]{FL}, we know that
	\begin{align*}
		\partial_u g_{\alpha \beta} = 2 \left(\nicefrac{f}{\|\grad \sigma\|}\right)g_{\alpha\beta} = -2 \left( \nicefrac{\rho_{uu}}{\rho_u} \right)g_{\alpha\beta}.
	\end{align*}
	Hence,
	\begin{align}
		\partial_u \Big(\nicefrac{g_{\alpha\beta}}{\left(e^{-\sigma}\sigma_u\right)^2}\Big) &= \Big(\nicefrac{\left(2e^{-2\sigma}f\sigma_u + 2e^{-2\sigma}(\sigma_u)^3 - 2e^{-2\sigma}\sigma_u\sigma_{uu}\right)}{\left(e^{-4\sigma}(\sigma_u)^4\right)}\Big) \; g_{\alpha\beta} \notag \\
		&= \left( \nicefrac{\left(2f + 2\sigma_u^2 - 2\sigma_{uu}\right)}{\left(e^{-2\sigma}(\sigma_u)^3\right)}\right) \; g_{\alpha\beta}\notag\\
		&= 0,\notag
	\end{align}
	where the last equality follows from $f = \sigma_{uu} - \sigma_u^2$; see \cite[Theorem 3.4, item (2)]{FL}.
	\par From \cite[Theorem 3.4, item (5)]{FL}, we get
	\begin{align*}
		\dt{\partial}_1 g_{\alpha\beta} = C_{1\alpha\beta} \equiv 0.
	\end{align*}
	Therefore, $\nicefrac{g_{\alpha\beta}}{\left(e^{-\sigma}\sigma_u\right)^2}$ is independent of $u^1,y^1$ and we set 
	\begin{align*}
		g_{\sf ideal} := \nicefrac{g_{\alpha\beta}}{\left(e^{-\sigma}\sigma_u\right)^2}.
	\end{align*}
	The conclusion then follows from $g_{_{11}}\equiv 1$; see~\cite[Theorem 3.4, item (5a)]{FL}.
	\par The metric decomposition is really independent of what local coordinates $u^2,\cdots, u^n$ we choose around a point in a regular level hypersurface; hence, the decomposition \eqref{eq:metric-decomp} can be extended along $\Sigma_{s_0} = \sigma^{-1}(s_0)$ indefinitely and extended in $u$ as long as $\sigma$ does not meet a critical point. Note the warping function is $e^{-\sigma}\sigma_u = \rho_u$.
\end{proof}
\par \dunderline{1.2pt}{\small \textit{\textbf{\textsf{Terminology/Notation:}}}}
\begin{itemize}
	\item \textsl{In the $u^i$ coordinates, we set $\y = (y^1, \vecv)$ where $\vecv = (y^2, \cdots, y^n)$ is also identified with $(0,y^2,\cdots,y^n)$.}
	\smallskip
	\item \textsl{We call a $(u,y^1)$-independent Finslerian manifold $\left(\Sigma, g_{\sf ideal}  \right)$ as in ~\hyperref[lem:metric-decomp]{Lemma~\ref{lem:metric-decomp}}, an \underline{\textit{ideal}} level hypersurface}. 
	\smallskip
	\item \textsl{The intrinsic geometric objects of $\Sigma$ w.r.t. $g_{\sf ideal}$ are mostly signified by an index ``{\footnotesize \sf ideal}''}.
	\item  \textsl{The intrinsic horizontal lift operator for $\Sigma$, which will be denoted by $h_{\Sigma}$ and the intrinsic double tangent vectors ${^\Sigma\dt{\partial}}_\alpha$ and ${^\Sigma\delta}_\alpha$. The intrinsic connection is denoted by $^\Sigma\nabla$. In other cases, the index ``{\footnotesize \sf level}'' will be used to refer to geometric quantities w.r.t. ``$g_{\sf level}$''; see~\eqref{eq:g-level}.}
	\smallskip
	\item \textsl{Going forward, $X,Y,Z$ denote vector fields that are tangent to the level hypersurfaces $\Sigma$.}
\end{itemize}
\begin{lemma}[The induced connection and curvature on $\Sigma$]\label{lem:curvature}
	Suppose $\bar{F} = e^{\sigma} F$ is a non-trivial {\cpt}. Then the following hold true:
	\begin{enumerate}
		\item \label{item:curvature-1} The induced connection from the Cartan connection on the level hypersurfaces $\Sigma$ coincides with the Cartan connection of the induced Finslerian metric. Also, the same holds for the Chern connection.  
		\par In addition, for a vector $\y = (y^1,\vecv)$ with $\vecv \neq 0$, we get
		\begin{align*}
			{^{\sf ideal} G}^\beta_\alpha(\y) =	{^{\sf level} G}^\beta_\alpha(\vecv)  = G^\beta_\alpha(\vecv), \quad
			{^{\sf ideal} C}_{\alpha\beta\gamma}(\y) = {^{\sf level} C}_{\alpha\beta\gamma}(\vecv) = C_{\alpha\beta\gamma}(\vecv),
		\end{align*}
		and
		\begin{align}
			{^{\sf ideal}\overstar{\Gamma}}^{\alpha}_{\beta\gamma}(\y) ={^{\sf level}\overstar{\Gamma}}_{\beta\gamma}(\text{$\vecv$}) = \overstar{\Gamma}^\alpha_{\beta\gamma}(\vecv).  \notag
		\end{align}  
		\smallskip
		\item \label{item:curvature-2} Setting $\mathcal{L}:= \left(\nicefrac{\rho_{uu}}{\rho_u}\right)^2$ and $\K := - \nicefrac{\rho_{uuu}}{\rho_u} = \left(f -\; \nicefrac{df(\grad\sigma)}{\|\grad\sigma\|^2}\right)$, the following identities hold for the $hh$-curvature tensor and flag curvatures. 
		\begin{enumerate}
			\smallskip
			\item \label{item:curv-1}
			${\Riem}^{\y}(X,Y)Z = {\Riem}_{\sf level}^{\vecv}(X,Y)Z - \mathcal{L} \Big( \left< Y,Z\right>_{\!\vecv}X - \left<X,Z \right>_{\!\vecv}Y \Big)$.
			\smallskip
			\item \label{item:curv-2}
			$
			\Riem^{\y}(X,Y){\sf n} = 0
			$ (in particular $\y$-independent).
			\smallskip
			\item \label{item:curv-3}
			$
			\Riem^{\y}(X,{\sf n}){\sf n} = (f -\; \nicefrac{df(\grad\sigma)}{\|\grad\sigma\|^2})X
			$ (in particular $\y$-independent).
			\smallskip
			\item \label{item:curv-4}
			$
			\Riem^{\y}(X,{\sf n})Y =  -\K \left< X,Y \right>_{\!\vecv} {\sf n}
			$
			(in particular $y^1$-independent).
			\smallskip
			\item \label{item:curv-5}
			\smallskip
			Items (\ref{item:curv-1}) -- (\ref{item:curv-4}) also hold for $\mathcal{R}iem$ verbatim.
			\smallskip	
			\item \label{item:curv-6}
			$
			\K\left( \y \wedge Y \right) = {\K}_{\sf level}(\vecv \wedge Y) - \mathcal{L}
			$
			in particular 
			$
			\K\left( X \wedge Y \right) = {\K}_{\sf level}(X \wedge Y) - \mathcal{L}.
			$
			\smallskip
			\item \label{item:curv-7}
			$
			\K\left( {\sf n} \wedge X \right) = \K
			$.
		\end{enumerate}
		\smallskip
		\item  \label{item:curvature-3} $\dt{\partial}_u \Riem^{\;\;i}_{h\;\;jk} \equiv0$; consequently, $\Riem^{\vecv} = \Riem^{\y}$ when $\vecv \neq 0$. 
	\par Using (\ref{item:curv-1}) -- (\ref{item:curv-4}), in coordinates, one has
		\begin{align*}
			\Riem^{\;\;1}_{1\;\;\alpha\beta}(\y) = \Riem^{\;\,\eta}_{1\;\;\alpha\beta}(\y) = \Riem^{\;\;\alpha}_{\beta\;\;1\gamma}(\y) = \Riem^{\;\;1}_{1\;\;1\alpha}(\y) = \Riem^{\;\;1}_{1\;\;1\alpha}(\y) = 0,
		\end{align*}
		\begin{align*}
			\Riem^{\;\,\alpha}_{1\;\;1\beta}(\y) = \left( \nicefrac{\rho_{uuu}}{\rho_u} \right)\delta^\alpha_\beta, \quad \Riem^{\;\;1}_{\beta\;\;1\gamma}(\y) = \K g_{\beta \gamma}, \quad \text{and}
		\end{align*}
		\begin{align*}
			{\Riem}^{\;\;\alpha}_{\beta\;\;\gamma\kappa} (\y) = {^{\sf level} \Riem}^{\;\;\alpha}_{\beta\;\;\gamma\kappa} (\vecv) -\left(\nicefrac{\rho_{uu}}{\rho_u}\right)^2\left( g_{\beta \kappa}\delta^\alpha_\gamma - g_{\beta\gamma}\delta^\alpha_\kappa \right).
		\end{align*}
	\par	Other curvature coefficients are obtained by symmetries of the curvature tensor. 
		\par \underline{As a result of \hyperref[item:curv-5]{item (\ref{item:curv-5})}, the same local formulas hold for $\mathcal{R}iem$}.
	\end{enumerate}
\end{lemma}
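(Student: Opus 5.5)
The plan is to work entirely in the adapted coordinates $(u^1=u,u^2,\dots,u^n)$ of \secref{sec:highlights}, where Lemma~\ref{lem:metric-decomp} gives the warped decomposition $g(x,\y)=du^2+\rho_u^2\,g_{\sf ideal}(x,\vecv)$. In these coordinates $g_{11}\equiv1$, $g_{1\alpha}\equiv0$, and $C_{1jk}\equiv0$ by items (\ref{item:highlight-1}) and (\ref{item:highlight-4}). The $hh$-curvature and flag curvature identities will then follow from the structure equations with the explicit Christoffel-type symbols of item (\ref{item:highlight-8}).

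\textbf{Item (\ref{item:curvature-1}).} First I will show that the Rund--Brown tensors vanish. Since ${\sf n}=\partial_1$ and ${\sf N}_i=g_{i1}=\delta_{i1}$, we have $M_\alpha=C_{11\alpha}=0$ and $M_{\alpha\beta}=C_{\alpha\beta1}=0$. By Matsumoto's theory~\cite{Mat1}, the induced Cartan connection on $\Sigma$ is then the intrinsic Cartan connection of the induced metric, and the Chern case is analogous.

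For the coefficient identities, I will use $B^i_\alpha=\delta^i_\alpha$. This reduces ${^\Sigma G}^\beta_\alpha$ to $G^\beta_\alpha$, and $G^i_j$ does not depend on $y^1$ by item (\ref{item:highlight-7}). Moreover $g_{\alpha\beta}$ and $C_{\alpha\beta\gamma}$ do not depend on $y^1$ by $C_{1\alpha\beta}=0$. Hence all these quantities may be evaluated at $\vecv$. Since $g_{\sf ideal}$ is a constant multiple (in $x$ along $\Sigma$) of $g_{\sf level}$, their $\overstar{\Gamma}$'s agree, and $\delta_\alpha$ restricted to $\Sigma$ matches ${^\Sigma\delta}_\alpha$ because $G^1_\alpha$ terms act through $\dt\partial_1$, which kills everything.

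\textbf{Item (\ref{item:curvature-2}).} I will compute $\Riem$ from the Cartan structure equation
\begin{align*}
\Riem(\delta_j,\delta_k)\partial_h=\delta_j\overstar{\Gamma}^i_{hk}-\delta_k\overstar{\Gamma}^i_{hj}+\overstar{\Gamma}^i_{rj}\overstar{\Gamma}^r_{hk}-\overstar{\Gamma}^i_{rk}\overstar{\Gamma}^r_{hj}+C^i_{hr}{\frak R}^r_{\;jk}.
\end{align*}
The inputs are the symbols of item (\ref{item:highlight-8}) written with $\rho''/\rho'=\rho_{uu}/\rho_u$.

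For (\ref{item:curv-1}) (indices $\alpha\beta\gamma\kappa$), the tangential terms reproduce ${^{\sf level}\Riem}$. The mixed products $\overstar{\Gamma}^\alpha_{1\kappa}\overstar{\Gamma}^1_{\beta\gamma}$ give $-(\rho_{uu}/\rho_u)^2(g_{\beta\kappa}\delta^\alpha_\gamma-g_{\beta\gamma}\delta^\alpha_\kappa)$, which is the Gauss equation with umbilic second fundamental form $(\rho_{uu}/\rho_u)g$.

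For (\ref{item:curv-3}), I differentiate $\overstar{\Gamma}^\alpha_{1\beta}$ in $u$ and subtract the square. This gives $-(\rho_{uu}/\rho_u)'-(\rho_{uu}/\rho_u)^2=-\rho_{uuu}/\rho_u$, matching $\K$ via $f=-\rho_{uu}/\rho$ and $df(\grad\sigma)/\|\grad\sigma\|^2=f_u/\sigma_u$.

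(\ref{item:curv-2}) is Codazzi. It holds because the coefficient $\rho_{uu}/\rho_u$ is constant on $\Sigma$ and the torsion term carries $C^1_{\cdot\cdot}=0$.

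(\ref{item:curv-4}) follows from (\ref{item:curv-3}) by the pair symmetry \eqref{eq:curv-op-self-adj} and skew-symmetry.

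The torsion contribution $C^i_{hr}{\frak R}^r_{\;jk}$ vanishes for $h=1$ or $i=1$. For tangential indices it is exactly the level-hypersurface torsion term, once I check ${\frak R}^\alpha_{\;\beta\gamma}$ is intrinsic; this uses $G^1_\alpha$ and item (\ref{item:highlight-7}). Hence (\ref{item:curv-5}) follows by setting ${\frak R}=0$. The flag curvature formulas (\ref{item:curv-6}) and (\ref{item:curv-7}) then come by inserting into the quotient formula, using orthogonality of ${\sf n}$ and the vanishing mixed terms.

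\textbf{Item (\ref{item:curvature-3}).} The claim $\dt\partial_1\Riem^{\;\;i}_{h\;\;jk}=0$ follows because every ingredient ($\overstar{\Gamma}$, $C$, ${\frak R}$) is $y^1$-independent.

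\textbf{Main obstacle.} I expect the hardest part to be the careful bookkeeping of the $\delta$-derivatives: verifying $\delta_1=\partial_1$ and $\delta_\alpha={^\Sigma\delta}_\alpha$ on the relevant functions, together with the torsion terms ${\frak R}^r_{\;1\alpha}$. I will handle this by repeatedly invoking items (\ref{item:highlight-5})--(\ref{item:highlight-7}).
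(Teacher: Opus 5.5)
Your treatment of items (1) and (3), and of (\ref{item:curv-4}) via \eqref{eq:curv-op-self-adj}, is the paper's. The genuine difference is in (\ref{item:curv-1})--(\ref{item:curv-3}), which you get by substituting the adapted-coordinate symbols into the structure equation, i.e.\ by running the classical warped-product curvature computation. The paper instead argues semi-intrinsically from the concircular PDE. It gets the umbilic Gauss formula from $\nabla_{X^{h_\Sigma}}\grad\rho=-f\rho X$. It obtains (\ref{item:curv-2}) and (\ref{item:curv-3}) by differentiating $\nabla_{X^h}\grad\sigma=fX+(X\sigma)\grad\sigma$ once more; the torsion term drops out because $\grad\sigma$ has vanishing vertical derivative.

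Each route buys something. Yours makes (\ref{item:curv-1}) a transparent coefficient comparison: the $r=1$ products give the umbilic term and the tangential ones give ${^{\sf level}\Riem}$. It would even let you read off (\ref{item:curv-4}) from $\Riem(\delta_\beta,\delta_1)\partial_\gamma$ without the pair symmetry. The paper's route produces (\ref{item:curv-2}), (\ref{item:curv-3}), their $\y$-independence and $\K=f-df(\grad\sigma)/\|\grad\sigma\|^2$ with no explicit symbols and no $\delta$-bookkeeping.

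That robustness matters here. Lemma~\ref{lem:metric-decomp} gives $\overstar{\Gamma}^\alpha_{1\beta}=(\rho_{uu}/\rho_u)\delta^\alpha_\beta$ and $\overstar{\Gamma}^1_{\alpha\beta}=-(\rho_{uu}/\rho_u)g_{\alpha\beta}$, and this is what your expression $-(\rho_{uu}/\rho_u)'-(\rho_{uu}/\rho_u)^2$ for (\ref{item:curv-3}) silently uses. Item~(\ref{item:highlight-8}) of \secref{sec:highlights}, read with $\rho''/\rho'=\rho_{uu}/\rho_u$ as you announce, carries the opposite signs. It would return $\rho_{uuu}/\rho_u-2\mathcal{L}$ instead of $\K$; only in (\ref{item:curv-1}) and (\ref{item:curv-2}) does the sign drop out. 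So compute the symbols from the decomposition rather than quoting that item.

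Finally, your torsion bookkeeping for (\ref{item:curv-5}) covers $h=1$, $i=1$ and purely tangential indices. It does not cover the term $C^\alpha_{\gamma\mu}\mathfrak{R}^\mu_{\;\beta 1}$, which is what separates the Cartan and Chern versions of (\ref{item:curv-4}). Within the framework of \secref{sec:highlights} this term vanishes, because $\mathfrak{R}^\mu_{\;\beta 1}=\partial_u G^\mu_\beta$ and $G^\mu_\beta$ is the $u$-independent ideal coefficient by item (1). The paper's own argument leaves this implicit too, since it only checks the ${\sf n}$-component there.
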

\begin{remark}
	\textsl{One immediate consequence of ~\hyperref[lem:curvature]{Lemma~\ref{lem:curvature}} is that the properties of having isotropic, sectional or constant flag curvature are inherited from $(M,F)$ by the level hypersurfaces $\Sigma$.}
\end{remark}
\begin{proof}[\footnotesize \textbf{Proof}]
	\hfill
	\begin{enumerate}
		\item []{\bf{\footnotesize \textsf{Proof of (\ref{item:curvature-1})}}}.
	According to \secref{subsubsec:hypersurface}, we set 
	\begin{align*}
		B^i_\alpha := \nicefrac{\partial x^i}{\partial u^\alpha} = \delta^i_\alpha, \quad \text{and}, \quad b^i = \delta^i_{1}.
	\end{align*}
	Consequently, we observe that
	\begin{align*}
		{\mathcal{B}}^\gamma_\alpha = \delta^\gamma_{\alpha}, \quad {\mathcal{B}}^\gamma_1 = 0, \quad \text{and}, \quad {\sf N}_i = \delta_{i1}.
	\end{align*} 
	\par The ambient and induced double tangent frames are related via
	\begin{align*}
		\dt{\partial}_{j} = {\mathcal{B}}^\alpha_j \, {^\Sigma\dt{\partial}}_\alpha + {\sf N}_j{\sf n}^v, \quad \text{and} \quad {^\Sigma\delta}_\alpha = B^i_\alpha \delta_i + H_\alpha {\sf n}^v,
	\end{align*}
	where
	\begin{align*}
		H_\alpha = {\mathsf{N}}_i \left(\left(\partial_\gamma B^i_{\alpha}\right)v^\gamma+ B^j_\alpha {G}^i_j  \right) = \delta_{i1} B^j_\alpha {G}^i_j = {G}^1_\alpha.
	\end{align*}	
	Therefore, 
	\begin{align*}
		\dt{\partial}_{1} = {\sf n}^v, \quad {^\Sigma\dt{\partial}}_\alpha =\dt{\partial}_{\alpha}, \quad {^\Sigma\delta}_\alpha = \delta_\alpha + G^1_\alpha {\sf n}^v;
	\end{align*}
	see~\secref{subsubsec:hypersurface} for more details. 
	\par To show that the induced connection is Cartan, we compute the third Rund-Brown tensor $M_{\alpha\beta}$. Since $b^k = \delta^k_1$ and $B^i_\alpha = \delta^i_\alpha$, we deduce that
	\begin{align*}
		M_{\alpha\beta} = C_{ijk}B^i_\alpha B^j_\beta \; b^k = C_{\alpha\beta 1} \equiv 0,
	\end{align*}
	Thus, by \cite[Theorem 5.2]{Mat1}, we know that the induced connection is the Cartan connection of the induced metric. 
	\par By \cite[II.(1.21)]{BF}, it follows 
	\begin{align}\label{eq:G-match}
		{^{\sf level} G}^\beta_\alpha (\vecv) &= {\mathcal{B}}^\beta_i \left( \left(\partial_\gamma B^i_{\alpha}\right)v^\gamma  + B^j_\alpha G^i_j \right) \\
		&=\delta^\beta_{i}\delta^j_\alpha G^i_j \notag \\
		&= G^\beta_\alpha (\vecv)\notag.
	\end{align}
	Also, by \secref{sec:highlights}\,-\,\hyperref[item:highlight-4]{item (\ref{item:highlight-4})}, we obtain
	\begin{align}\label{eq:C-match}
		^{\sf level} C_{\alpha\beta\gamma}(\vecv) &= \nicefrac{1}{2} \, {^\Sigma\dt{\partial}_{\gamma}} {^\Sigma g}_{\alpha \beta} \\
		&= \nicefrac{1}{2} \, {^\Sigma\dt{\partial}_{\gamma}} {g}_{\alpha \beta} \notag \\
		&= \nicefrac{1}{2} (\dt{\partial}_{\alpha} -  {\sf N}_\alpha \dt{\partial}_1) {g}_{\alpha \beta} \notag\\
		&= {C}_{\alpha\beta\gamma} - \nicefrac{1}{2}\,{\sf N}_\alpha \dt{\partial}_1 (g_{\alpha \beta})\notag \\
		&= {C}_{\alpha\beta\gamma} - \,{\sf N}_\alpha C_{\alpha\beta 1}\notag\\
		& = C_{\alpha\beta\gamma}(\vecv)\notag.
	\end{align}
	Now 
	$
	\overstar{\Gamma}^\alpha_{\beta\gamma}(\text{$\vecv$}) = {^{\sf level}\overstar{\Gamma}}^{\alpha}_{\beta\gamma}(\vecv)
	$
	follows from \eqref{eq:G-match} and \eqref{eq:C-match} and the identity \eqref{eq:Chris-symbs} combined with the fact, ${\Gamma}^\alpha_{\beta\gamma} = {^{\sf level}\overstar{\Gamma}}^{\alpha}_{\beta\gamma}$ that is straightforward from the metric decomposition \eqref{eq:metric-decomp}.
	\par The conclusion follows as we note that $\dt{\partial}_u G^i_j \equiv 0$ and $\dt{\partial}_u C_{ijk} \equiv 0$ result in $\dt{\partial}_u \overstar{\Gamma}^{k}_{ij} = 0$ i.e. these quantities are independent of $y^1$; in particular in our setting, we can set $y^1 = 0$. 
	\par Since the Chern connection is indeed the horizontal Cartan covariant differentiation, it immediately follows that the induced connection from Chern connection is the Chern connection of the induced metric as well. 
	\smallskip	
	\item []{\bf{\footnotesize \textsf{Proof of (\ref{item:curvature-2})}}}.
	\emph{In what follows, $\nabla$ is the Cartan connection.} Let us derive the (Gauss-Peterson-Minardi-Codazzi) hypersurface curvature relations. We find it more illuminating to do this in a hybrid coordinate and coordinate-free manner than merely in coordinates.
	\par Let $X = X^\alpha\partial_\alpha$; thus, $X^h = X^\alpha\delta_\alpha$. Consequently
	\begin{align*}
		{X^{h_\Sigma}} &= X^\alpha \; {^\Sigma \delta_\alpha} \\
		&= X^\alpha \left( \delta_{\alpha} + {G}^1_\alpha \dt{\partial}_u \right) \\
		&= X^h + X^\alpha G^1_\alpha \dt{\partial}_u.
	\end{align*}
	Using the PDE \eqref{eq:Fins-geom-pde} and noting $\grad\rho = - \rho\grad \sigma$, we get
	\begin{align}\label{eq:induced-nabla-rho}
		\nabla_{X^{h_\Sigma}} {\grad \rho} &= \nabla_{\left( X^h + X^\alpha G^1_\alpha \dt{\partial}_u \right)}  {\grad \rho}\\
		&= -f\rho X + X^\alpha G^1_\alpha\nabla_{\dt{\partial}_u} \left({\rho_u \partial_u}\right)\notag \\
		&=-f\rho X + X^\alpha \rho_u G^1_\alpha  C^k_{11}\partial_k \notag \\
		&=-f\rho X. \notag
	\end{align}
	Similarly,
	\begin{align}\label{eq:tang-horiz-cov-der}
		\nabla_{X^{h_\Sigma}} {Y} &= \nabla_{\left( X^h + X^\alpha G^1_\alpha \dt{\partial}_u \right)} {Y}\\
		&= \nabla_{X^h} {Y} + X^\alpha G^1_\alpha \nabla_{\dt{\partial}_u} \left({Y^j\partial_j}\right)\notag\\
		&=  \nabla_{X^h} {Y} + X^\alpha Y^j G^1_\alpha C^k_{1j}\partial_k\notag\\
		&= \nabla_{X^h} {Y}.\notag
	\end{align}
	Using \eqref{eq:induced-nabla-rho} and noting ${\sf n} = \nicefrac{\grad\rho}{\rho_u}$, one obtains
	\begin{align*}
		\nabla_{X^{h_\Sigma}} Y &= {^\Sigma\nabla}_{X^{h_\Sigma}} Y - \left< \nabla_{X^{h_\Sigma}} {\sf n},Y \right> {\sf n}\\
		&= {^\Sigma\nabla}_{X^{h_\Sigma}} Y + \left(\nicefrac{f\rho}{\rho_{uu}}\right)\left<X,Y\right> {\sf n}\\
		&= {^\Sigma\nabla}_{X^{h_\Sigma}} Y - \left(\nicefrac{\rho_{uu}}{\rho_u^2}\right) \grad \rho.
	\end{align*}
	\begin{enumerate}
		\smallskip
		\item []{{\footnotesize \textsf{Proof of (\ref{item:curv-1})}}.} 
		Upon iteration, we get
		\begin{align*}
			\nabla_{X^{h_\Sigma}} \nabla_{Y^{h_\Sigma}} Z  &= {^\Sigma\nabla}_{X^{h_\Sigma}} {^\Sigma\nabla}_{Y^{h_\Sigma}} Z - \left(\nicefrac{\rho_{uu}}{\rho_u^2}\right)  \left<X,  {^\Sigma\nabla}_{^\Sigma Y^h} Z \right> \grad \rho\\
			&- \left(\nicefrac{\rho_{uu}}{\rho_u^2}\right) \left( \left<\nabla_{^\Sigma X^h} Y, Z \right> + \left< Y, \nabla_{^\Sigma X^h} Z \right>\right) \grad \rho\\
			& - \left(\nicefrac{\rho_{uu}}{\rho_u}\right)^2 \left<Y,Z \right> X.
		\end{align*}
		Let ${\frak R}$ denote the h-torsion tensor (anti-symmetric) and $^\Sigma {
			\frak R}$ the same tensor for $\Sigma$, by \eqref{eq:torsion}, we have
		\begin{align}\label{eq:torsion-R-1}
			[X^h,Y^h] = [X,Y]^h + {\frak R}(X,Y),
		\end{align}	
		and similarly,
		\begin{align*}
			[X^{{h_\Sigma}}, Y^{{h_\Sigma}}] = {[X,Y]^{h_\Sigma}} + {^\Sigma {\frak R}}(X,Y).
		\end{align*}
		Thus, as a reuslt of \eqref{eq:tang-horiz-cov-der}, 
		\begin{align*}
			\nabla_{[X^{{h_\Sigma}}, Y^{{h_\Sigma}}]} Z &= \nabla_{{[X,Y]^{h_\Sigma}}}Z + \nabla_{{^\Sigma {\frak R}}(X,Y) }Z\\
			& = \nabla_{[X,Y]^h}Z + \nabla_{{^\Sigma {\frak R}}(X,Y) }Z.
		\end{align*}
		holds true. 
		\par By the \emph{horizontal symmetry} of the induced Cartan connection and the standard facts about the Lie bracket
		\begin{align}\label{eq:tang-horiz-bracket}
			{^\Sigma\nabla}_{X^{{h_\Sigma}}}Y - {^\Sigma\nabla}_{Y^{{h_\Sigma}}} X = {^\Sigma[X,Y]} = [X,Y].
		\end{align}
		Combining \eqref{eq:tang-horiz-cov-der}, \eqref{eq:tang-horiz-bracket} and \eqref{eq:torsion-R-1}, we get
		\begin{align}\label{eq:omega-diff}
			&\Omega(X^h,Y^h)Z -	{^\Sigma\Omega}(X^{{h_\Sigma}},Y^{{h_\Sigma}})Z \\
			&= \nabla_{{\frak R}(X,Y)}Z - {^\Sigma\nabla}_{^\Sigma {\frak R}(X,Y)}Z - \left(\nicefrac{\rho_{uu}}{\rho_u}\right)^2\Big(\left<Y,Z \right> X  - \left< X,Z\right> Y\Big).\notag
		\end{align}
		\par {\small \bf Claim.} ${\frak R}(X,Y) = {^\Sigma{\frak R}}(X,Y)$ and $\nabla_{{\frak R}(X,Y)}Z = {^\Sigma\nabla}_{{^\Sigma{\frak R}}(X,Y)}Z$. 
		\par {\small \bf Proof of the claim:}
		By \eqref{eq:torsion-coeff} and \hyperref[item:curvature-1]{item (\ref{item:curvature-1})} above, in coordinates, we deduce that
		\begin{align}\label{eq:R-match}
			^\Sigma {\frak R}^\alpha_{\;\;\beta\gamma}(\vecv) &= ({\delta}_\beta + {G}^1_\beta \dt{\partial}_u)  \; {^{\sf level}G}^\alpha_\gamma - ({\delta}_\gamma + {G}^1_\gamma \dt{\partial}_u) \; {^{\sf level} G}^\alpha_\beta\\
			&= {\frak R}^\alpha_{\;\;\beta\gamma} (\vecv).\notag
		\end{align}
		where we have used $\dt{\partial}_u G^i_j \equiv 0$ from \secref{sec:highlights}\,-\,\hyperref[item:highlight-7]{item (\ref{item:highlight-7})}.
		\par Then, recalling ${\frak R}$ is vertical, by using \eqref{eq:C-match}, one gets
		\begin{align}\label{eq:vertical-dif}
			\nabla_{\dt{\partial}_\alpha}\partial_\beta - {^\Sigma\nabla}_{^\Sigma\dt{\partial}_\alpha}\partial_\beta &= C^i_{\alpha\beta}\partial_i - {^\Sigma C}_{\alpha\beta}^\gamma \partial_\gamma\\
			&= C^\eta_{\alpha\beta}\partial_\eta - {^\Sigma C}_{\alpha\beta}^\gamma \partial_\gamma\notag\\
			&= 0.\notag
		\end{align}
	The claim follows upon the combination \eqref{eq:R-match} and \eqref{eq:vertical-dif}. \scalebox{0.6}{$\blacksquare$}
\par Thus, noting \eqref{eq:omega-diff}, we have proven that
\begin{align*}
	\Riem^{\vecv}(X,Y)Z = {^\Sigma\Riem}^{\vecv}(X,Y)Z   - \left(\nicefrac{\rho_{uu}}{\rho_u}\right)^2\Big(\left<Y,Z \right>_{\vecv} X  - \left< X,Z\right>_{\vecv} Y\Big),
\end{align*}
which is, in particular, always tangent to $\Sigma$. The conclusion then follows from $\Riem^{\y} = \Riem^{\vecv}$ (when $\vecv \neq 0$) which will be established in the proof of \hyperref[item:curvature-3]{item (\ref{item:curvature-3})} below. 
\smallskip
\item	[]{{\footnotesize \textsf{Proof of (\ref{item:curv-2})}}.} 
Using \secref{sec:highlights}\,-\,\hyperref[item:highlight-3]{item (\ref{item:highlight-3})}, we deduce that
\begin{align*}
	\Riem(X,Y)\grad \sigma &=
	\Omega(X^h,Y^h)\grad \sigma\notag\\
	&= \nabla_{X^h}\nabla_{Y^h} \grad \sigma - \nabla_{Y^h}\nabla_{X^h} \grad \sigma - \nabla_{\left[X^h, Y^h \right]} \grad \sigma \notag\\
	&= \nabla_{X^h}\left( fY + (Y\sigma) \grad\sigma  \right) - \nabla_{Y^h}\left( fX + (X\sigma) \grad\sigma  \right) \\
	&- \nabla_{\left[X, Y \right]^h + {\mathfrak{R}}(X,Y)} \grad \sigma \\
	&= (Xf)Y - (Yf)X + f[X,Y] + \left([X,Y]\sigma\right)\grad \sigma\\
	&+ (Y\sigma)\left( fX + (X\sigma)\grad\sigma \right) - (X\sigma)\left( fY + (Y\sigma)\grad\sigma \right)\\
	& - f[X,Y] - ([X,Y]\sigma)\grad\sigma\\
	&= (Xf)Y - (Yf)X\\
	& = 0;
\end{align*}
i.e., $\Riem(X,Y){\sf n} = 0$.
\smallskip 	 
\item	[]{{\footnotesize \textsf{Proof of (\ref{item:curv-3})}}.}
Using the $h$-symmetry of the Cartan connection, we can write
\begin{align*}
	\Riem(X,\grad \sigma)\grad \sigma &= \nabla_{X^h}\nabla_{(\grad\sigma)^h} \grad \sigma - \nabla_{(\grad\sigma)^h}\nabla_{X^h} \grad \sigma - \nabla_{\left[X^h, (\grad\sigma)^h \right]} \grad \sigma \notag\\
	&= \nabla_{X^h}\left( f\grad\sigma + \|\grad\sigma\|^2 \grad\sigma  \right) - \nabla_{(\grad\sigma)^h}\left( fX + (X\sigma) \grad\sigma  \right) \\
	&- \nabla_{\left[X, \grad\sigma \right]^h} \grad \sigma - \nabla_{{\frak R}(X, \grad\sigma )} \grad \sigma \\
	&= (Xf)\grad\sigma + f\nabla_{X^h}\grad\sigma\\
	& + (X\|\grad\sigma\|^2)\grad\sigma + \|\grad\sigma\|^2(fX + (X\sigma)\grad\sigma)\\
	& - df(\grad\sigma)X - f \nabla_{(\grad\sigma)^h} X\\
	& - f[X,\grad\sigma] - ([X,\grad\sigma]\sigma)\grad\sigma\\
	&= \Big(\|\grad\sigma\|^2f - df(\grad\sigma)\Big)X,
\end{align*}
where, we have used \secref{sec:highlights}\,-\,\hyperref[item:highlight-3]{item (\ref{item:highlight-3})}. In the last equality, we have used the h-symmetry
\begin{align*}
	\nabla_{X^h}\grad\sigma - \nabla_{(\grad\sigma)^h} X = [X,\grad\sigma],
\end{align*}
along with 
\begin{align*}
	Xf = X\sigma = X\|\sigma\|^2 = 0,
\end{align*}	
that implies 
\begin{align*}
	[X,\grad\sigma]\sigma = X(\|\grad \sigma\|^2) - \grad\sigma X(\sigma) = 0.
\end{align*}
In particular, one obtains
\begin{align*}
	\Riem(X,{\textsf{n}}){\textsf{n}} = \Big(f - \; \nicefrac{df(\grad\sigma)}{\|\grad\sigma\|^2}\Big)X,
\end{align*}
\emph{which is $\y$-independent!}. 
\smallskip
\item	[]{{\footnotesize \textsf{Proof of (\ref{item:curv-4})}}.}
 By the symmetricity of the Cartan curvature operator (see~\eqref{eq:curv-op-self-adj}), we get
\begin{align*}
	\left<\Riem(X,\grad \sigma)Y, Z\right> = \left<\Riem(Y,Z)X,\grad\sigma\right> = 0.
\end{align*}
Thus, from \hyperref[item:curv-3]{item (\ref{item:curv-3})} above, one deduces that
\begin{align*}
	\left<\Riem(X,\grad \sigma)Y, \grad\sigma\right> &=  -\left<\Riem(Y,\grad\sigma)\grad\sigma, X\right> \\
	&= - \Big(\|\grad\sigma\|^2f - df(\grad\sigma)\Big)\left< X,Y \right> \\
	&= -\K \|\grad\sigma\|^2\left< X,Y \right>.
\end{align*}
Therefore,
\begin{align*}
	\Riem(X,\grad \sigma)Y =  -\K \left< X,Y \right> \grad\sigma,
\end{align*}
or
\begin{align*}
	\Riem(X,{\sf n})Y =  -\K \left< X,Y \right> {\sf n}.
\end{align*}
\smallskip
\item	[]{{\footnotesize \textsf{Proof of (\ref{item:curv-5})}}.}
Indeed as we saw, using the Chern connection, the proofs of \hyperref[item:curv-1]{items (\ref{item:curv-1})-- (\ref{item:curv-3})} are almost verbatim; indeed, since in their proofs, we have only used $h$-symmetricity of the Cartan connection (not in the proof of \hyperref[item:curv-4]{item (\ref{item:curv-4})} where symmetry of the curvature operator needs metric compatibility), substitute $0$ for the torsion ${\frak R}$ to obtain the argument for the Chern connection.
\par For \hyperref[item:curv-4]{item (\ref{item:curv-4})}, we can write
\begin{align*}
	\mathcal{R}iem(X,{\sf n})Y &= \Riem(X,{\sf n})Y + \nabla_{{\frak R}(X,{\sf n})}Y \\
	&= -\K \left< X,Y \right> {\sf n} + \nabla_{{\frak R}(X,{\sf n})}Y,
\end{align*}
and by \secref{sec:highlights}\,-\,\hyperref[item:highlight-4]{item (\ref{item:highlight-4})}, we deduce that $ \nabla_{{\frak R}(X,{\sf n})}Y =0$ because
\begin{align*}
	\nabla_{{\frak R}(\partial_i,{\sf n})}\partial_j = C^1_{j r} \mathfrak{R}^r_{\;\,i1} = C_{1jr}\mathfrak{R}^r_{\;\,i1} = 0.
\end{align*}
This yileds
\begin{align*}
	\mathcal{R}iem^{\;\;1}_{\beta\;\;1\gamma} = \K \delta_{\beta\gamma}.
\end{align*}
\smallskip
\item	[]{{\footnotesize \textsf{Proof of (\ref{item:curv-6})}}.}
 Straightforward from \hyperref[item:curv-1]{item (\ref{item:curv-1})} above. 
\smallskip
\item	[]{{\footnotesize \textsf{Proof of (\ref{item:curv-7})}}.}
Straightforward from \hyperref[item:curv-3]{item (\ref{item:curv-3})} above.	
\end{enumerate}
\smallskip
\item []{\bf{\footnotesize \textsf{Proof of (\ref{item:curvature-3})}}}.
 Recall 
\begin{align*}
\Riem^{\;\;i}_{h\;\,jk} = K^{\;\;i}_{h\;\;jk} - C^i_{hr}{\frak R}^r_{jk},
\end{align*}
where the tensor $K$ is 
\begin{align*}
K^{\;\;i}_{h\;\;jk} = \delta_k\overstar{\Gamma}^i_{hj} + \overstar{\Gamma}^r_{hj}\overstar{\Gamma}^i_{rk} - j\swap k,
\end{align*}
and the $R^1$ torsion is
\begin{align*}
{\frak R}^i_{jk} = \delta_k G^i_j - j\swap k.
\end{align*}
Here, $j\swap k$ denotes the term(s) obtained from the preceding term(s) by interchanging
the indices j and k. See~\cite[Section 2.3.2]{AIM} for details (Caution: there is a sign difference because of indexing conventions; explained in~\secref{subsubsec:hh-curv}).
\par Now $\dt{\partial}_u K^{\;\;i}_{h\;\;jk} = 0$ follows from $\dt{\partial}_u \overstar{\Gamma}^{i}_{jk} \equiv 0$ and $\dt{\partial}_u G^i_j \equiv 0$ while $\dt{\partial}_u {\frak R}^i_{jk} = 0$ also needs $\dt{\partial}_u C^{i}_{jk} \equiv 0$; all established in \cite{FL}; see also \secref{sec:highlights}.
\par Consequently, for $\y = (y^1,\vecv)$ with $\vecv \neq 0$, 
\begin{align*}
\Riem^{\vecv} = \Riem^{\y},
\end{align*}
holds true and the conclusion follows.	 
\end{enumerate}
\end{proof}
\section{Proof of the main result}
\subsection{The geometry around critical points}
\begin{lemma}\label{lem:level-const-curv}
	Suppose $(M,F)$ is backward or forwar dcomplete and $e^{\sigma}F$ is a non-trivial {\cpt}. Then, 
	\begin{enumerate}
		\item \label{item:const-curv-1} Critical points of $\sigma$ are isolated.
		\smallskip
		\item \label{item:const-curv-2}
		Near each critical point $p$, the level sets of the forward or backward distance function from $p$ (i.e., forward or backward geodesic spheres) coincide with level sets of $\sigma$. Therefore, we can take $u$ to be ($\pm$) the forward or backward distance function from $p$, $r = \dist_p$; i.e., $r = |u|$. This in particular means the forward or backward distance from $p$ coincide (at least near $p$). 
		\smallskip
		\item \label{item:const-curv-3} $\sigma_{u}(0) := \lim_{r \downarrow 0} \sigma_u(u)$ exists and is zero. 
		\smallskip
		\item \label{item:const-curv-4} On a deleted neighborhood of $p$, the ideal metric $g_{\sf ideal}$ has constant flag curvature.
		\smallskip
		\item \label{item:const-curv-5} Setting $\sigma_{uu}(0) := \lim_{r\downarrow 0} \sigma_{uu}(u)$ if it exists; we have $\sigma_{uu}(0)$ is well-defined and non-zero. Therefore, the point $p$ is a non-degenerate critical point of $\sigma$; thus, is a local extrema. 
	\end{enumerate}
\end{lemma}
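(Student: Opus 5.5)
Item (\ref{item:const-curv-1}) requires no new argument. It is exactly the discreteness of ${\sf Crit}$ from \cite[Theorem~4.2]{FL}, recalled at the end of \secref{sec:highlights}. So I would fix an isolated critical point $p$ and a small punctured neighbourhood $U\setminus\{p\}$ on which $\sigma$ is regular, and work there with the adapted coordinates $(u,u^2,\dots,u^n)$ and the warped decomposition of \hyperref[lem:metric-decomp]{Lemma~\ref{lem:metric-decomp}}:
\begin{align*}
F^2=du^2+\rho_u^2F_\Sigma^2 .
\end{align*}

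For item (\ref{item:const-curv-2}), the key input is that the normal geodesics are the unit-speed integral curves of $\grad\sigma$ (see \secref{sec:highlights}). Close to $p$, every such curve, followed backward or forward depending on whether $p$ is a local minimum or maximum along it, has to enter $p$. It cannot leave every compact set, by completeness and isolation, and the flow of $\grad\sigma$ has no other rest points nearby. The plan is to show first that these curves reach $p$ in finite arc-length, and that this arc-length is constant along each level $\Sigma$ because $\sigma$ depends only on $u$. After normalising $u(p)=0$, each level $\sigma^{-1}(c)$ is then the set of points at $u$-distance $|u|$ from $p$ along normal geodesics.

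To identify $|u|$ with $\dist_p$, I would use a Gauss-lemma argument. Let $\gamma$ be any curve from $p$ to $x$. Since $g_{11}\equiv 1$ and $g_{1\alpha}\equiv 0$, the decomposition gives $F(\dt\gamma)\ge|\dt u|$, with equality only when $\gamma$ runs along a normal geodesic. Hence $\dist_p(x)=|u(x)|$, and this holds for both the forward and the backward distance, since the normal geodesics realise both. Completeness enters through Hopf--Rinow, which guarantees that minimising geodesics exist.

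For item (\ref{item:const-curv-3}), $\sigma$ is smooth at $p$, so $\sigma_u=\pm F(\grad\sigma)$ extends continuously and tends to $F(\grad\sigma)(p)=0$.

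Item (\ref{item:const-curv-4}) is where I expect the main obstacle. My plan is to compare small geodesic spheres with the unit sphere of the Minkowski norm $F_p$. The geodesic spheres $\{r=\epsilon\}$, carrying the induced metric $\rho_u(\epsilon)^2 g_{\sf ideal}$, rescaled by $\epsilon^{-2}$, should converge as $\epsilon\to 0$ to the indicatrix of $F_p$ with its induced metric. This suggests $\rho_u(\epsilon)\sim c\,\epsilon$ and that $g_{\sf ideal}$ is isometric to the indicatrix metric. I would then use \hyperref[lem:curvature]{Lemma~\ref{lem:curvature}}, items (\ref{item:curv-3}) and (\ref{item:curv-4}): radial flag curvatures equal $\K(u)$ independently of direction. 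Letting $u\to0$ and requiring the Jacobi equation for normal geodesics to close up smoothly at $p$ forces $\rho_u'(0)^2=1$ in suitable normalisation, by the usual smoothness-at-the-origin argument for warped products. The Gauss equation \hyperref[item:curv-6]{(\ref{item:curv-6})} then reads
\begin{align*}
\K(X\wedge Y)=\rho_u^{-2}\K_{\sf ideal}-\mathcal L .
\end{align*}
Because the left side is bounded near $p$ (it is smooth flag curvature of $F$ at $p$) while $\rho_u\to0$, the constant-in-$u$ term $\K_{\sf ideal}$ must satisfy
\begin{align*}
\K_{\sf ideal}=\lim_{u\to0}\rho_u^2(\K+\mathcal L)=\lim \rho_{uu}^2 ,
\end{align*}
which is a constant. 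This is the step I expect to be delicate, since it needs uniform control of $\rho_{uu}$ as $u\to 0$.

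Item (\ref{item:const-curv-5}) then follows. If $\sigma_{uu}(0)=0$, then $\rho_{uu}(0)=-\rho\,\sigma_{uu}(0)=0$, and the limit above gives $\K_{\sf ideal}=0$. But $\Sigma$ is a compact small geodesic sphere, so a flat compact $(n-1)$-dimensional ideal metric is impossible for $n\ge3$, and for $n=2$ it contradicts the closing-up condition $\rho_{uu}(0)\neq0$ forced in the previous step. Hence $\sigma_{uu}(0)\neq0$, the Hessian at $p$ equals $\sigma_{uu}(0)g_p$, and $p$ is a nondegenerate local extremum.
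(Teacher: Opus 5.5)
\textbf{Gap in item (2).} Everything after item (2) depends on it. You need $u$ to be single-valued on a punctured ball with $u\to 0$ at $p$. You need the level $\Sigma$ to be a whole geodesic sphere $\cong \Sp^{n-1}$. And you need $\rho_u\to 0$ simultaneously along all of $\Sigma$.

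Your reason why every normal geodesic near $p$ ``has to enter $p$'' is that it cannot leave every compact set and the flow has no other rest points. That reasoning is not valid. Integral curves of a nonvanishing field on a punctured ball are free to exit it. Near an isolated saddle-type critical point (e.g.\ $x^2-y^2$), all but finitely many gradient lines do exactly that, and completeness of $F$ does not help. Calling $p$ a minimum or maximum ``along the curve'' also anticipates item (5). Some input from the PDE is indispensable here.

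The paper's route is a nearest-point argument. A forward (or backward) nearest point of $p$ on the level through $o$ is joined to $p$ by a minimizing geodesic that meets that level orthogonally, so it is a normal geodesic. The warped-product structure then forces all normal geodesics from that level to end at $p$, as in \cite{FL}, Theorem 4.2: $\sigma_u$ and the warping factor $\rho_u$ depend only on $u$, so the fibre collapses at a single parameter value.

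An alternative repair uses the PDE directly. Along a geodesic $\gamma$ from $p$, with reference vector $\dot\gamma$, the field $V=\grad\rho\circ\gamma$ satisfies $\nabla_{\dot\gamma}V=-f\rho\,\dot\gamma$ and $V(0)=0$. Hence $V=-\bigl(\int_0^t f\rho\bigr)\dot\gamma$, so radial geodesics from $p$ are normal geodesics. Once this is established, your calibration $F(\dot\gamma)\ge|\dot u|$ is a clean way to identify the forward and backward distances with $|u|$ simultaneously. It is tidier than the paper's bookkeeping with $\eta_{\sf f}$ and $\eta_{\sf b}$.

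\textbf{Items (4) and (5).} These agree with the paper in substance. Your Gauss-equation argument for item (4) is the flag-curvature version of the paper's bound on $\|\Riem\|_g^2$. The uniform control of $\rho_{uu}$ you flag as delicate is not needed. Multiplying $\K(X\wedge Y)=\rho_u^{-2}\K_{\sf ideal}(X\wedge Y)-\mathcal L$ by $\rho_u^2$ gives $\rho_{uu}^2=\K_{\sf ideal}(X\wedge Y)-\rho_u^2\,\K(X\wedge Y)$. The last term tends to $0$ because flag curvature is bounded near $p$. So $\lim\rho_{uu}^2$ exists and equals $\K_{\sf ideal}$ for every flag and every point, and the indicatrix comparison can be dropped.

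Item (5) for $n\ge 3$ is the paper's Cartan--Hadamard step. For $n=2$ the paper's argument says nothing, since a curve carries no curvature and $\Sp^1$ is not simply connected. So your closing-up idea genuinely adds something, but as written it only appeals to ``the usual'' warped-product argument. To make it precise, note that along a radial geodesic each $\partial_\alpha$ is a Jacobi field with $J(0)=0$ and $J'(0)\neq 0$. By the warped-product form of $g_{\alpha\beta}$, $g_{\sf n}(J,J)$ equals $\rho_u^2$ times a $u$-independent factor. Hence $\rho_u(u)/u$ has a finite nonzero limit, and $\rho_{uu}(0)\neq 0$ in every dimension. This too is available only after item (2) is established.
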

\begin{proof}[\footnotesize \textbf{Proof}]
		\hfill
		\begin{enumerate}
			\item []{\bf{\footnotesize \textsf{Proof of (\ref{item:const-curv-1})}.}}
			This has been proven in \cite[Theorem 4.2]{FL}. 
			\smallskip
			
			\item []{\bf{\footnotesize \textsf{Proof of (\ref{item:const-curv-2})}.}}
			The argument for this part is similar to the proof of \cite[Theorem 4.2]{FL}.
			\par By \hyperref[item:const-curv-1]{item (\ref{item:const-curv-1})} above, we know $p$ is isolated. Let $o$ be a regular point within both the forward and backward inejctivity radius of $p$. Let $s \ge 0$ and $\overline{po}_{\sf f}(s)$ denote the unique (forward) unit-speed forward geodesic from $p$ to $o$ and $\overline{po}_{\sf b}(s)$ the unique backward unit-speed backward geodesic from $p$ to $o$ (which is a forward unit speed forward geodesic from $o$ to $p$ and must be unique since $o$ is within the backward injectivity radius of $p$).
			\par Denote by $\Sigma^{\sf f}_s$ the connected component of the regular hypersurface $\sigma^{-1}(\sigma(\overline{po}_{\sf f}(s)))$ that contains $\overline{po}_{\sf f}(s)$; thus in particular, $\Sigma^{\sf f}_s$ is a separating (two-sided) hypersurface. Similarly define $\Sigma^{\sf b}_s$.   
			\par Let $z_{\sf f}$ denote a forward nearest point on $\Sigma_o$ form $p$ and $z_{\sf b}$, a backward nearest point on $\Sigma_o$. Denote by $\eta_{\sf f}$ the unique geodesic from $p$ to $z_{\sf f}$ and $\eta_{\sf b}$ the unique backward geodesic from $p$ to $z_{\sf b}$ (if backward complete). Parametrize $\eta_{\sf f}$ by the criterion $\eta_{\sf f}(s) \in \Sigma_s$ and similarly parametrize $\eta_{\sf b}$.   
			\par By the Gauss lemma (e.g. \cite[Theorem 3.18]{Oh}) , $\eta_{\sf f}(s)$ is perpendicular to the level hypersurfaces $\Sigma_s$ w.r.t. $g^{{\sf n}}$ and $\eta_{\sf b}(s)$ is perpendicular to $\Sigma_s$ w.r.t. $g^{-{\sf n}}$.
			\par According to \secref{sec:highlights}\thinspace -\,\hyperref[item:highlight-1]{item (\ref{item:highlight-1})}, perpendicularity to the levels $\Sigma_s$ is independent of $\y$ thus we deduce both $\eta_{\sf f}(s)$ and $\eta_{\sf b}(s)$ must then be perpendicular to $\Sigma_s$ w.r.t. $g^{\sf n}$. Also by \cite[Theorem 3.4]{FL}, we know normal geodesics are reversible and $F({\sf n}) = F(-{\sf n})$ thus both $\eta_{\sf f}(s)$ and $\eta_{\sf b}(s)$ are reversible normal geodesics. 
			\par Now the same argument as in \cite[Theorem 4.2]{FL} using the warped product decomposition, yileds $\eta_{\sf f}(0) = \eta_{\sf b}(0) = p$. This means the parameter $s$ is both the forward and backward distance from $p$. Hence, inparticular, $\Sigma_s$ must be completely within the injectivity radius of $p$ and is diffeomorphic to $\Sp^{n-1}$. 
			\par Since $\Sigma_s$ is within the inejctivity radius of $p$, thus we conclude the level hypersurface $\sigma^{-1}(\sigma(o))$ does not have any other connected component within the inejctivity radius of $p$. In particular, at least within the injectivity radius of $p$, the $\sigma$-levels coincide with the distance spheres (note we observed that the forward and backward distance spheres from $p$ coinicde by the above argument). 
			\par Thus without ambiguity, we can set $\dist_p = r = |u|$.

			 \item []{\bf{\footnotesize \textsf{Proof of (\ref{item:const-curv-3})}.}}
			At $p$ one has $d \sigma(p) = 0$ or $\grad^{\y} \sigma = 0$ for all $\y$; as a reuslt, $\sigma_u(0) := \lim_{r\downarrow 0} \sigma_u(u)$ vanishes by continuity and noting $\grad^{\y} \sigma = \sigma_u\partial_u$ on the regular domain.
			\smallskip 
			\item []{\bf{\footnotesize \textsf{Proofs of (\ref{item:const-curv-4}) and (\ref{item:const-curv-5}) }.}}
			Recall the block form of $(g_{ij})$ from \secref{sec:highlights}\,-\,\hyperref[item:highlight-1]{item (\ref{item:highlight-1})}. Below, we use \enquote{$\bigcdot$} as a placeholder for $\alpha,\beta ,\cdots$. 
				\par From \hyperref[lem:curvature]{Lemma~\ref{lem:curvature}}, we know that the only non-zero coefficients among of $\Riem^{\;\;i}_{h\;\;jk}$ are ${\Riem}^{\;\;{\bigcdot}}_{\,{\bigcdot}\;\;{\bigcdot}{\bigcdot}}$, $\Riem^{\;\,1}_{\,{\bigcdot}\;\;1{\bigcdot}}$ and $\Riem^{\;\,\bigcdot}_{\,1\;\;1{\bigcdot}}$ along with ones that obtained from these three by curvature symmetries. 
				\par Therefore,
				\begin{align*}
					\left\|\Riem\right\|^2_g &= \Riem^{\;\;i}_{h\;\;jk}\Riem^{h\;\;jk}_{\;\;i} \\
					&= \left\|{\Riem}^{\;\;{\bigcdot}}_{\,{\bigcdot}\;\;{\bigcdot}{\bigcdot}}\right\|_{g_{\sf level}}^2 + 2\left\|\Riem^{\;\,{\bigcdot}}_{1\;\,1{\bigcdot}}\right\|^2_{g_{\sf level}} + 2\left\|\Riem^{\;\,1}_{\,{\bigcdot}\;\;1{\bigcdot}}\right\|^2_{g_{\sf level}}\\
					&= \left\|{\Riem}\right\|_{g_{\sf level}}^2  + 2\K^2\left\|\delta^{\bigcdot}_{\bigcdot}\right\|^2_{g_{\sf level}} + 2\K^2 \left\|g_{\bigcdot\bigcdot}\right\|^2_{g_{\sf level}}\\
					&= \left(\rho'\right)^{-4}(r)\left\|{\Riem}\right\|_{g_{\sf ideal}}^2  + 4(n-1)^2\K^2\\
					&= \left(\rho_u\right)^{-4}(r)\left\| {^{\sf level}\Riem}^{\;\;\alpha}_{\beta\;\;\gamma\kappa} -\left(\rho_{uu}(r)\right)^2\left( g^{\sf ideal}_{\beta \kappa}\delta^\alpha_\gamma - g^{\sf ideal}_{\beta\gamma}\delta^\alpha_\kappa \right) \right\|_{g_{\sf ideal}}^2 + 4(n-1)^2\K^2.
				\end{align*}
				\par First, note that since $\overstar{\Gamma}$ and $G^i$ are invariant under constant rescaling of the Finslerian metric, we deduce that
				\begin{align*}
					{^{\sf level}\Riem}^{\;\;\alpha}_{\beta\;\;\gamma\kappa} = {^{\sf ideal}\Riem}^{\;\;\alpha}_{\beta\;\;\gamma\kappa}.
				\end{align*}	
				\par Now letting $r \downarrow 0$, the LHS is bounded; by virtue of the fact that ${^{\sf level}\Riem}$ is independent of $u$, we deduce that
				\begin{align}\label{eq:level-curv}
					{^{\sf level}\Riem}^{\;\;\alpha}_{\beta\;\;\gamma\kappa} &= \lim_{r\downarrow 0}  \left(\rho_{uu}(r)\right)^2\left( g^{\sf ideal}_{\beta \kappa}\delta^\alpha_\gamma - g^{\sf ideal}_{\beta\gamma}\delta^\alpha_\kappa \right).
				\end{align}
				as well as $\lim_{r\downarrow 0} \rho_{uuu}(u) =:\rho_{uuu}(0) = 0$. 
				\par From \eqref{eq:level-curv} and \hyperref[item:const-curv-2]{item (\ref{item:const-curv-2})} above, one infers that
				\begin{enumerate}
					\smallskip
					\item $\rho_{uu}(0) := \lim_{r\downarrow 0} \rho_{uu}(u)$ (or equivalently $\sigma_{uu}(0)$) exists.
					\smallskip
					\item $g_{\sf ideal}$ is a Finslerian metric on the $(n-1)$-sphere with constant flag curvature $(\rho_{uu}(0))^2$.  
					\smallskip
					\item  $\rho_{uu}(0) \neq 0$; since by the Finslerian Cartan-Hadamard theorem, there cannot exist non-positively flag curved metrics on simply connected closed manifolds; e.g. see~\cite[Theorem 8.6]{Oh}
				\end{enumerate} 
		\end{enumerate}
\end{proof}
\subsection*{Proof of the \hyperref[thrm:main]{Theorem}}
\par By \hyperref[lem:level-const-curv]{Lemma~\ref{lem:level-const-curv}}, we know that $\left(\Sigma, g_{\sf level} \right)$ has constant positive flag curvature. 

\par If $F$ is in addition reversible, then clearly, so is the induced Finslerian norm on $\Sigma$. Therefore, by the rigidity result of Kim-Min~\cite{KM} (``reversible Finslerian manifolds with constant positive flag curvature are Riemannian''), we deduce that the level hypersurfaces must be Riemannian round spheres. In particular, 
\begin{align*}
	{^{\sf level} C}_{\alpha\beta\gamma}(\vecv) \equiv 0.
\end{align*}
\par Now by \hyperref[lem:curvature]{Lemma~\ref{lem:curvature}}\,-\,\hyperref[item:curvature-1]{item (\ref{item:curvature-1})}  and \secref{sec:highlights}\,-\,\hyperref[item:highlight-4]{item (\ref{item:highlight-4})}, we deduce that $C_{ijk} \equiv 0$ in the coordinate system mentioned in \secref{sec:highlights}, thus on the regular domain, $\mathbf{C} \equiv 0$. 
\par Since by \hyperref[lem:level-const-curv]{Lemma~\ref{lem:level-const-curv}}\,-\,\hyperref[item:const-curv-1]{item (\ref{item:const-curv-1})} (see~\cite[Theorem 4.2]{FL} for the proof), the set of critical points is a discrete set, by continuity, we deduce that $\mathbf{C} \equiv 0$ on $M$. Therefore, $(M,F)$ is Riemannian. 
\par $(M,F)$ is Berwaldian is equivalent to $\overstar{\Gamma}^i_{jk}$ being independent of $\y$; see~\cite[Theorem 10.2.1]{BCS}. Therefore, based on \hyperref[lem:curvature]{Lemma~\ref{lem:curvature}}\,-\,\hyperref[item:curvature-1]{item (\ref{item:curvature-1})}, if $(M,F)$ is Berwaldian then so is an ideal hypersurface $\left(\Sigma, g_{\sf ideal} \right)$. 
\par Thus in this case, an ideal hypersurface is of constant positive flag curvature and is Berwaldian. Similar to the previous case, we only need to argue that such a Berwaldian space is Riemannian.  
\par The fact that a Berwald manifold with a constant positive flag curvature must be Riemannian has been established in~\cite{BHS} using the Szabo's famous classification of Berwaldian structures~\cite{Szabo}; and in the recent article \cite{TN} by showing that in this case, vanishing $S$-curvature and mean Landsberg curvature implies vanishing mean Cartan tensor. The conclusion then follows from Diecke's theorem. 
\par This concludes the proof of the \hyperref[thrm:main]{Theorem}.
\newline\phantom{}\hfill\scalebox{1.2}{\qed}
\subsection*{Proof of the \hyperref[cor:main]{Corollary}}
\par Upon applying the Riemannian theory, one concludes $\sigma$ has at most twocritical points. The classification also follows from the Riemannian classification; see the beginning of~\secref{sec:intro}.
 \newline\phantom{}\hfill\scalebox{1.2}{\qed}

\vspace{10mm}

\begin{thebibliography}{99}
\bibitem{AZbook} 
	Akbar-Zadeh, H., 
	\textit{Initiation to global Finslerian geometry}, 
Elsevier Science, 2006. 
\bibitem{AIM}
Antonelli, P.L. and Ingarden, R.S. and Matsumoto, M.,
\textit{The Theory of Sprays and
	Finsler Spaces with Applications in Physics and Biology}, Kluwer Academic Publishers, 1993.
\bibitem{BCS}
Bao, D. and Chern, S. S. and Shen, Z.,
\textit{An introduction to Riemann-Finsler geometry}, Springer, 2000.
\bibitem{BF}
Bejancu, A. and Farran, H. R., 
\textit{Geometry of pseudo-Finsler submanifolds}, Kluwer Academic Publishers, 2000.
\bibitem{BHS}
Boonnam, N. and Hama, R. and Sabau, S. V.,
\textit{Berwald space of bounded curvature are Riemannian},
Acta Mathematica Academiae Paedagogicae Nyiregyhaziensis, {\bf 33} (2017), 339--347.
	\bibitem{Br}
Brinkman H. W., \textit{Einstein spaces which are mapped conformally on each other}, Math. Ann. {\bf 94} (1925), 119--145.
\bibitem{FL}
Fathi, Z. and Lakzian, S., 
\textit{The rigidity of conformal circle-preserving transformations on Berwaldian manifolds}, DOI: \url{https://doi.org/10.48550/arXiv.2607.01350}. 


\bibitem{Ish}
Ishihara, S.,
\textit{On infinitesimal concircular transformations},
K\^odai Math. Sem. Rep. {\bf 12} (1960), 45--56.
\bibitem{Ish-Tash}
Ishihara, S. and Tashiro, Y.,
\textit{On Riemannian manifolds admitting a concircular
	transformation},
Math. J. Okayama Univ. {\bf 9} (1959), 19--47.
\bibitem{Kan}
Kanai, M., 
\textit{On a differential equation characterizing a
	Riemannian structure of a manifold},
Tokyo J. Math. {\bf 6}
(1983), 143--151.
\bibitem{KM}
Kim, C.-W. and Min, K.:,
\textit{Finsler metrics with positive constant flag curvature},
Arch. Math. (Basel) {\bf 92} (2009), 70--79.
\bibitem{Knebel}
Knebelman, M . S.,
\textit{Conformal geometry of generalized metric spaces},
Proc. Nat. Acad. Sci. USA {\bf 15} (1929), 376--379.
\bibitem{Kuh}
K\"{u}hnel, W., 
\textit{Conformal Transformations between Einstein Spaces.}
In collection: Conformal Geometry, Kulkarni, R.S., Pinkall, U. (eds), 1988.
\bibitem{Mat1}
Matsumoto, M.,	
\textit{The induced and intrinsic Finsler connections
	of a hypersurface and Finslerien
	projective geometry},
J. Math. Kyoto Univ. (JMKYAZ)
{\bf 25} no. 1 (1985), 107--144.
\bibitem{NM}
Nomizu, K. and Yano, K., \textit{On circles and spheres in Riemannian geometry.} Math.
Ann. {\bf 210} (1974), 163--170. 
\bibitem{Oh}
Ohta, S. I.,
\textit{Comparison Finsler Geometry}, Springer, 2021.
\bibitem{Shen-lectures}
Shen, Z.,
\textit{Lectures on Finsler geometry}, World Scientific Publishing Company, 2001. 
\bibitem{SY} Shen, Z. and  Yang, G.,
\textit{On concircular transformations in Finsler geometry}, Results Math. {\bf 74} no. 162 (2019), 435--445.
\bibitem{Szabo}
Szab\'o, Z. I.,
\textit{Positive definite Berwald spaces. Structure theorems on Berwald spaces},
Tensor (N.S.), {\bf 35} (1981), 25--39.
\bibitem{Tash1}
Tashiro, Y., 
\textit{Complete Riemannian manifolds and some vector
	fields},
Transact. AMS 117 (1965), 251-275.
\bibitem{Tash2}
Tashiro, Y., 
\textit{Conformal transformations in complete Riemannian manifolds},
Publ. Study Group of Geometry {\bf 3} (1967).

\bibitem{TN}
Tayebi, A. and Najafi, B.,
\textit{On Berwald Spaces with non-Zero Flag Curvature},
arXiv:2601.20087.
\bibitem{VO}
Vogel, W. O.,
\textit{Kreistreue transformationen in Riemannschen
	r\"{a}umen}. Arch. der Math. {\bf 21} (1970), 641--645.
\bibitem{Yano1}
Yano, K., \textit{On circular geometry I. Concircular transformations},
Proc. Imp. Acad. Tokyo {\bf 16} (1940), 195--200.
\bibitem{Yano2}
\bysame,
\textit{On circular geometry II. Integrability conditions of $\rho_{\mu \nu} = \varphi g_{\mu\nu}$}
Proc. Imp. Acad. Tokyo {\bf 16} (1940), 354--360. 
\bibitem{Yano3}
\bysame,
\textit{On circular geometry III. Theory of curves},
Proc. Imp. Acad. Tokyo
{\bf 16} (1940), 442--448. 
\bibitem{Yano4}
\bysame,
\textit{On circular geometry IV. Theory of subspaces},
Proc. Imp. Acad. Tokyo {\bf 16} (1940), 505--511.
\bibitem{Yano5}
\bysame,
\textit{On circular geometry V. Einstein spaces},
Proc. Imp. Acad. Tokyo {\bf 18} (1942),
446--451.
\end{thebibliography}
\end{document}